\newtheorem{theorem}{Theorem}[section]
\newtheorem{lemma}[theorem]{Lemma}
\newtheorem{prop}[theorem]{Proposition}
\newtheorem{cor}[theorem]{Corollary}
\newtheorem*{defin}{Definition}
\newtheorem*{PTC}{Parabolic-Type Test}
\theoremstyle{remark}
\newtheorem{example}[theorem]{Example}
\newtheorem*{remark}{Remark}
\newcommand{\mysubsection}%
{\@startsection{subsection}{2}{\z@}{-3.25ex plus -1ex minus -.2ex}{-1ex}{\normalsize\sc}}
\renewcommand{\phi}{\varphi}
\renewcommand{\epsilon}{\varepsilon}
\newcommand{\U}{\mathbb{U}}
\newcommand{\C}{\mathbb{C}}
\newcommand{\dw}{\omega}
\newcommand{\htwo}{H^2}
\renewcommand{\notin}{\not\in}
\renewcommand{\Re}{{\rm Re\,}}
\newcommand{\Sp}{\text{Sp}}
\begin{document}



\title[Spectra of composition operators]{Spectra of some composition operators and associated weighted composition operators}
\author{Paul S. Bourdon}
\address{Department of Mathematics\\  Washington and Lee University, Lexington VA 24450}
\email{pbourdon@wlu.edu}

\begin{abstract}
{\scriptsize
We characterize the spectrum and essential spectrum of  ``essentially linear fractional'' composition operators acting on the Hardy space $H^2(\U)$ of the open unit disc $\U$. When the symbols of these composition operators have Denjoy-Wolff point on the unit circle, the spectrum and essential spectrum coincide.  Our work  permits us to describe the spectrum and essential spectrum of certain associated weighted composition operators on $H^2(\U)$.}
\end{abstract}

\maketitle
 
 \section{Introduction}

 Let $\U$ be the open unit disc in the complex plane, let $H(\U)$  be the space of analytic functions on $\U$, and let $H^2(\U)$ be the classical Hardy space, consisting of those functions in $H(\U)$ whose Maclaurin coefficients are square summable.  For $\phi$ an analytic selfmap of $\U$, let $C_\phi$ be the composition operator with symbol $\phi$ so that $C_\phi f = f\circ \phi$ for any $f \in H(\U)$.   Clearly $C_\phi$ preserves $H(\U)$.  Littlewood \cite{Lit} proved $C_\phi$ also preserves $H^2(\U)$; and thus, by the closed-graph theorem, $C_\phi:H^2(\U)\rightarrow H^2(\U)$ is a bounded linear operator.    
 
  Following \cite[p.\  48]{BLNS}, we say an analytic selfmap $\phi$ of $\U$ is {\it essentially linear fractional} provided 
  \begin{itemize}
  \item[(a)] $\phi(\U)$ is contained in a proper subdisc of $\U$ internally tangent to the unit circle at $\eta \in \partial \U$;
  \item[(b)] $\phi^{-1}(\{\eta\}):= \{\zeta \in \partial \U: \eta\ \text{belongs to the cluster set of}\ \phi\ \text{at}\ \zeta\}$  consists of one element, say $\zeta_0 \in \partial \U$; and 
 \item[(c)]  $\phi'''$ extends continuously to $\U\cup \{\zeta_0\}$.
 \end{itemize}
 For instance,  $\phi(z)=  \frac{2z^2-3z+3}{2z^2-7z+7}$ is essentially linear fractional with $\eta = \zeta = 1$ (see Example~\ref{ELFSQE} below).   Theorem 7.6 of \cite{BLNS} shows that for each essentially linear fractional selfmap $\phi$ of $\U$, there is a linear fractional selfmap $\psi$ of $\U$ (satisfying $\phi^{(j)}(\zeta) = \psi^{(j)}(\zeta)$ for $j \in \{0, 1, 2\}$)  such that $C_\phi - C_\psi$ is a compact operator.  Thus, an essentially linear fractional composition operator differs from a linear fractional composition operator by a compact operator.   However, this compact-difference condition does not characterize essentially linear fractional composition operators: there are, in fact, many linear fractional selfmappings of $\U$ that are not essentially linear fractional; for example, no automorphism of $\U$ is essentially linear fractional, and if $\psi$ is a linear fractional selfmapping of $\U$  such that $\psi(\partial \U)$ does not contact $\partial \U$, then $\psi$ is not essentially linear fractional.

 Much is known about spectra of composition operators on $H^2(\U)$; see, e.g., Chapter 7 of \cite{CMB}.  However, for many composition operators on $H^2(\U)$---including essentially linear fractional composition operators, complete spectral characterizations have not been obtained.  Moreover, there are interesting general questions that remain open, two of which we highlight below. 
 
   The analysis of the spectral behavior of $C_\phi:H^2(\U)\rightarrow H^2(\U)$ is typically case based, with the cases depending upon the {\it type} of the symbol $\phi$: ``dilation,'' ``hyperbolic,'' ``parabolic automorphism,'' and ``parabolic nonautomorphism.''  (We review the notion of type in the ``Preliminaries'' section below.)  Here are two of the questions addressed in this paper: 
 \begin{itemize}
 \item[Q1]   For $\phi$ of hyperbolic type or either of the parabolic types, do the spectrum and essential spectrum of $C_\phi$ always coincide?
 \item[Q2]    Let $r(C_\phi)$ denote the spectral radius of $C_\phi$ and $r_e(C_\phi)$, the essential spectral radius.   When $\phi$ is of dilation type and is either univalent or analytic on the closed disc, then work by Cowen and MacCluer  \cite{CMSP} and Kamowitz \cite{KM}  shows that the spectrum of $C_\phi$ consists of a disc $D$ (possibly degenerate) centered at the origin of radius $r_e(C_\phi)$ together with isolated eigenvalues.   In this situation, must every  point in $D$ be in the essential spectrum of $C_\phi$?  
 \end{itemize}
 In this paper, we characterize the spectrum and essential spectrum of  composition operators on $H^2(\U)$ induced by  selfmappings $\phi$ of $\U$ that are essentially linear fractional. For such  mappings $\phi$, our work shows that ``yes'' is the answer to Q1.  For question Q2, we show that the ``disc plus isolated eigenvalues'' characterization of the spectrum continues to be valid for  composition operators whose symbols are dilation-type essentially linear fractional  maps (which need not be univalent, or analytic on the closed disc), and we show that the answer to Q2 is also yes for such composition operators.    We also show that the spectrum and essential spectrum of certain weighted composition operators coincide.  
  
  This paper is organized as follows. In the next section, we set the stage for our work, providing needed background information.  In Section 3, we present some general results about spectra of composition operators; for example, we show that for any self-mapping $\phi$ of $\U$ of hyperbolic type or either of the parabolic types, if $\lambda$ is an eigenvalue of $C_\phi$ having an outer function as a corresponding eigenvector, then $\lambda$ does not belong to the compression spectrum of $C_\phi$; i.e., $C_\phi - \lambda I$ has dense range.   We also show that no eigenvalue of $C_\phi$ can be isolated for certain selfmaps $\phi$ of parabolic nonautomorphism type. Using results in Section 3 as lemmas, we characterize in Section 4 the spectrum and essential spectrum of $C_\phi$ when $\phi$ is essentially linear fractional.  We show, e.g., that if $\phi$ is an essentially linear fractional mapping of parabolic nonautomorphism type, then the spectrum and essential spectrum of $C_\phi$ coincide, each equaling a spiral $\{e^{-at}: t\ge 0\}$, where $a$ is the second derivative of $\phi$ at its Denjoy-Wolff point. (This characterization is consistent with one obtained by Cowen \cite[Corollary 6,2]{Cow2} for a special family of composition operators whose symbols are univalent and of parabolic nonautomorphism type.)   In Section 5, we characterize the spectrum and essential spectrum of certain weighted composition operators $f\mapsto gf\circ \phi$, where $g$ is bounded and analytic on $\U$ and $\phi$ is essentially linear fractional.  For example,  a consequence of Theorem~\ref{HCWCT} is that if both $g$ and $\phi$ are analytic on the closure of $\U$ and $\phi$ is an essentially linear fractional map of hyperbolic type, then the spectrum and essential spectrum of the weighted composition operator $C_{g,\phi}:  H^2(\U) \rightarrow H^2(\U)$ coincide, each equaling the disc $\{z: |z| \le |g(\dw)|\phi'(\dw)^{-1/2}\}$, where $\dw$ is the Denjoy-Wolff point of $\phi$.   (Here, $C_{g,\phi} f = g f\circ \phi$).   Our spectral characterizations in Section 5 appear to the be the first such characterizations for non power-compact weighted composition operators on $H^2(\U)$.  
\section{Preliminaries}

For detailed information about the Hardy space $H^2(\U)$ as well as inner and outer functions the reader may consult \cite{Dur}, for example.  Good general references for properties of selfmaps $\phi$ of $\U$ and of the composition  operators they induce on $H^2(\U)$  are \cite{CMB} and \cite{Sh2}.  We concentrate here on background information crucial to our work.

  \subsection{Reproducing kernels for $H^2(\U)$}   The Hardy space $H^2(\U)$ is a Hilbert space with inner product 
$$
\langle f, g\rangle = \sum_{n=0}^\infty \hat{f}(n)\overline{\hat{g}(n)},
$$
 where $(\hat{f}(n))$ and $(\hat{g}(n))$ are the sequences of Maclaurin coefficients for $f$ and $g$ respectively.  The norm of $f\in H^2(\U)$ is given by $\left(\sum_{n=0}^\infty |\hat{f}(n)|^2\right)^{1/2}$ or, alternatively, by
$$
\|f\|_{H^2(\U)}^2 = \frac{1}{2\pi} \int_0^{2\pi} |f(e^{it})|^2\, dt,
$$
where $f(e^{it})$ represents the radial limit of $f$ at $e^{it}$, which exists for a.e. $t\in [0, 2\pi)$ (with respect to Lebesgue measure).  Also, the inner product of two functions $f$ and $g$ in $H^2(\U)$ may be expressed as a boundary integral:
$$
\langle f, g\rangle = \frac{1}{2\pi} \int_0^{2\pi} f(e^{it})\overline{g(e^{it})}\, dt.
$$

Let $h\in H^{\infty}(\U)$, the Banach algebra of bounded analytic functions on $\U$ with $\|h\|_\infty = \sup\{|h(z)|: z\in \U\}$.   The integral representation of the $H^2(\U)$ norm makes it clear that  $\|hf\|_{H^2(\U)} \le \|h\|_\infty \|f\|_{H^2(\U)}$ for each $f\in H^2(\U)$.  Thus, the {\it multiplication operator} $M_h: H^2(\U) \rightarrow H^2(\U)$, defined by $M_h f = hf$, is bounded and linear on $H^2(\U)$.

For each $\alpha\in \U$, let $K_\alpha = 1/(1-\bar{\alpha}z)$.  Then $K_\alpha \in H^2(\U)$, and it is easy to see that  for each function $f\in H^2(\U)$, 
$$
\langle f, K_\alpha\rangle = f(\alpha);
$$
thus, $K_\alpha$ is the {\it reproducing kerne}l at $\alpha$ for $H^2(\U)$.  

	   It is easy to check that $(C_\phi)^*K_\alpha  = K_{\phi(\alpha)}$.  Also, the reproducing kernels $K_\alpha$, $\alpha \in \U$, are eigenfunctions for adjoint multiplication operators on $H^2(\U)$:  if $h\in H^\infty(\U)$ then $(M_h)^*K_\alpha = \overline{h(\alpha})K_\alpha$.  Finally, we have the following estimate for each $f\in H^2(\U)$ and each $z\in \U$:
	   \begin{equation}\label{PWB}
	   |f(z)| = |\langle f, K_z\rangle| \le \|f\|_{H^2(\U)}\|K_z\|_{H^2(\U)} = \frac{\|f\|_{H^2(\U)}}{\sqrt{1-|z|^2}}.
	   \end{equation}

\subsection{The Denjoy-Wolff Point $\dw$.}  Throughout this paper, $\phi$ will denote an analytic function on $\U$ for which $\phi(\U)\subseteq \U$; i.e., $\phi$ will always denote an analytic {\it  selfmap} of $\U$.  For $n$ a nonnegative integer, let $\phi^{[n]}$ denote the $n$-th iterate of $\phi$ so that, e.g.,  $\phi^{[0]}$ is the identity function on $\U$ and $\phi^{[2]} = \phi \circ \phi$.     If $\phi$ is not an elliptic automorphism of $\U$, then there is a  (unique)  point $\dw$ in the closure $\U^-$ of $\U$ such  that
$$
\dw = \lim_{n\rightarrow \infty} \phi^{[n]}(z)
$$
for each $z\in \U$.   The point $\dw$, called the {\it Denjoy-Wolff point} of $\phi$, is also
characterized as follows: if $|\dw| < 1$, then $\phi(\dw) = \dw$ and
$|\phi'(\dw)| < 1$; if
$\dw\in \partial \U$, then $\phi(\dw) =\dw$ and $0 < \phi'(\dw) \le 1$.  If
$|\dw| = 1$, then $\phi(\dw)$ represents the angular (non-tangential) limit of $\phi$
at
$\dw$ and $\phi'(\dw)$ represents the angular derivative of $\phi$ at $\dw$.    The location of the Denjoy-Wolff point and the behavior of iterate sequences $(\phi^{[n]}(z))$ as they approach the Denjoy-Wolff point strongly influence properties of the operator $C_\phi$.  For example, no $\phi$ with Denjoy-Wolff point on $\partial U$ can be the symbol of a compact composition operator on $H^2(\U)$ (see e.g., \cite[p.\ 56]{Sh2}).  The Denjoy-Wolff point plays a major role in the classification system for selfmaps of $\U$ presented in the next subsection.

\subsection{Type for Selfmaps of $\U$}    Let $\phi$ be a selfmap of $\U$ with Denjoy-Wolff point $\dw$.  We classify $\phi$ as follows (cf. \cite[Definition 0.3]{BoS}):
\begin{itemize}
\item  Dilation type:  $\dw \in \U$;
 \item Hyperbolic type: $\dw \in \partial \U$ and $\phi'(\dw)< 1$;
 \item Parabolic automorphism type: $\dw \in \partial \U$, $\phi'(\dw) =1$, and the iterate sequence $(\phi^{[0]}(0))$ is separated in the hyperbolic metric on $\U$;
 \item Parabolic nonautomorphism type:   $\dw \in \partial \U$, $\phi'(\dw) =1$, and the iterate sequence $(\phi^{[0]}(0))$ is not separated in the hyperbolic metric on $\U$.
 \end{itemize}

Assuming $\phi'(\dw) = 1$, distinguishing whether $\phi$ is of parabolic automorphism or nonautomorphism type can be difficult.  However, there is an easy test if $\phi$ has enough smoothness near its Denjoy-Wolff point.   

\begin{defin} Let $n$ be a positive integer, let $\zeta\in \partial \U$, and let $0 \le \epsilon < 1$. Following
\cite[p.~50]{BoS}, we say that the self-map
$\phi$ of\/
$\U$ belongs to $C^{n+\epsilon}(\zeta)$ provided that $\phi$ is
differentiable at $\zeta$ up to order $n$ $($viewed as a function
with domain $\U\cup \{\zeta\})$ and, for $z\in \U$, has the expansion
$$
\phi(z) = \sum_{k=0}^{n}\frac{\phi^{(k)}(\zeta)}{k!}(z-\zeta)^k
+ \gamma(z),
$$ 
where $\gamma(z) = o(|z-\zeta|^{n+\epsilon})$ as $z\rightarrow
\zeta$ from within $\U$.
\end{defin}

It is not difficult to show that $\phi\in C^{n}(\zeta)$ whenever $\phi^{(n)}$ extends
continuously to $\U\cup\{\zeta\}$.   
 
The following is Theorem 4.4  of \cite{BoS}.
\begin{PTC}  Suppose that $\phi\in C^2(\dw)$  and $\phi'(\dw) = 1$.  Then $\Re(\dw \phi''(\dw)) \ge 0$; moreover,
\begin{itemize}
\item[(a)]  if $\phi''(\dw) = 0$ or if $\Re(\dw \phi''(\dw)) > 0$, then $\phi$ is of parabolic nonautomorphism type;
\item[(b)] if $\dw\phi''(\dw)$ is pure imaginary (and nonzero) and $\phi\in C^{3+\epsilon}(\dw)$ for some positive $\epsilon$, then $\phi$ is of parabolic automorphism type.
\end{itemize}
\end{PTC}

\subsection{Horocyclic Selfmaps of $\U$}
 For $\zeta\in \partial \U$ and $\beta > 0$, let $H(\eta, \beta) = \{z: |1- z\bar{\eta}|^2 < \beta(1- |z|^2)\}$ be the open horodisc (of radius $\beta/(1+\beta)$) internally tangent to the unit circle at $\eta$. Call a self-map $\phi$ of $\U$  {\it horocyclic} at $\eta$  provided $\phi(\U)$ lies in a horodisc $H(\eta, \beta)$, for some $\beta >0$.    Note any essentially linear fractional selfmap of $\U$ must be horocyclic at $\eta$ for some $\eta\in \partial \U$.  
 
 In the next subsection, we present a criterion (Proposition~\ref{ITSP}) for $\phi$ to be horocyclic at $1$.   We apply the criterion to show $\phi(z) = 2/(\sqrt{13-4z}-1)$ is horocyclic at $1$  (see Example~\ref{EIHT}).

\subsection{Essentially Linear Fractional Selfmaps of $\U$} \label{ELFS} 
 Let $\phi$ be an essentially linear-fractional selfmapping of $\U$.  There are unimodular constants $\eta$ and $\zeta$ such that  $\phi\in C^3(\zeta)$, $\phi(\zeta) = \eta$, $\phi^{-1}(\{\eta\}) = \{\zeta\}$,  and $\phi$ is horocyclic at $\eta$.  Recall from the Introduction that in this context $\phi^{-1}(\{\eta\}) = \{\zeta\}$ means that $\zeta$ is the only point in $\partial \U$ whose cluster set under $\phi$ includes $\eta$.   It follows that if $\eta\ne \zeta$, then $\|\phi\circ\phi\|_\infty < 1$ (for otherwise there would be a sequence $(z_n)$ in $\phi(\U)$ such that $\lim_n|\phi(z_n)| = 1$, but since $\phi(U)$ is contained in a proper subdisc of $\U$ internally tangent to $\partial \U$ at $\eta$, then both $(z_n)$ and $(\phi(z_n))$ approach $\eta$,  making $\phi^{-1}(\{\eta\}) = \{\zeta, \eta\}$, a contradiction.)   Hence if $\eta\ne\zeta$, then $(C_\phi)^2$ is compact (see, e.g., \cite[p.\ 23]{Sh2}).  When some power of $C_\phi$ is compact the essential spectrum of $C_\phi$ is just $\{0\}$ and  a result of Caughran and Schwartz \cite[Theorem 3]{CSw} shows that the spectrum of $C_\phi$ is $\{0\}\cup \{\phi'(\dw)^n, n=0, 1, 2, \ldots\}$, where $\dw$ (necessarily in $\U$) is the Denjoy-Wolff point of $\phi$.  {\em Thus, in our discussions below concerning spectra  for essentially linear fractional composition operators, we will restrict our attention to the case where the essentially linear fractional symbol $\phi$ fixes a point $\zeta\in \partial \U$.    Without loss of generality, we assume $\zeta = 1$.}

Let $\phi$ be an {\em arbitrary} selfmap of $\U$ such that $\phi(1) = 1$ and $\phi \in C^2(1)$. Let $p= \phi'(1)$, $a = \phi''(1)$,  and let $T(z) = (1+z)/(1-z)$ so that $T$ maps $\U$ univalently onto the right halfplane $\Pi$.   Because $\phi\in C^2(1)$, the selfmapping $\Phi:=T\circ \phi \circ T^{-1}$ of $\Pi$ has the following representation:
\begin{equation}\label{RHPR}
 \Phi(w) = \frac{1}{p} w + \left(\frac{1}{p} - 1 + \frac{a}{p^2}\right) + \Gamma(w), w\in \Pi,
\end{equation}
 where $\Gamma(w)  = o(1)$ as $|w|\rightarrow \infty$ (cf. \cite[Equation (27)]{BLNS}).  Clearly, $\phi$ is horocyclic at $1$ if and only if  there is a constant $c>0$ such that
\begin{equation}\label{BBC}
\Re(\Phi(w)) \ge c \quad  \text{for all}\ w\in \Pi.
\end{equation}  
The representation (\ref{RHPR}) reveals that if (\ref{BBC}) holds, then $\Re\left(\frac{1}{p} - 1 + \frac{a}{p^2}\right) \ge c > 0$.   Thus  if $\phi$ is horocyclic at $1$, then $1/\phi'(1) - 1 + \phi''(1)/\phi'(1)^2$ must be positive.  The converse is true if, say, $\phi$ extends to be continuous on the closed disc and $|\phi(\zeta)| < 1$ for $\zeta\in \partial \U \setminus \{1\}$.  
  
 \begin{prop}\label{ITSP}  Let $\phi$ be an analytic selfmap of $\U$ that extends to be continuous on $\U^-$.  Suppose that $\phi\in C^2(1)$, that $\phi(1)=1$ and that $|\phi(\zeta)|<1$ for $\zeta\in \partial \U\setminus \{1\}$.  If 
 \begin{equation}\label{LBC}
\Re\left(\frac{1}{\phi'(1)} - 1 + \frac{\phi''(1)}{\phi'(1)^2}\right)> 0
\end{equation}
then $\phi(\U)$ is contained in a proper subdisc of $\U$ internally tangent to $\partial \U$ at $1$; i.e., $\phi$ is horocyclic at $1$.
\end{prop}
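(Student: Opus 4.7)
The plan is to shift the problem to the right halfplane $\Pi$ via the Cayley transform $T(z)=(1+z)/(1-z)$ and analyze the induced selfmap $\Phi=T\circ\phi\circ T^{-1}$ of $\Pi$, for which the representation (\ref{RHPR}) is available. In these coordinates the containment $\phi(\U)\subseteq H(1,\beta)$ amounts to producing a constant $c>0$ with $\Re(\Phi(w))\ge c$ for every $w\in\Pi$, which is exactly the bound (\ref{BBC}).

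First I would record that $p:=\phi'(1)$ is a positive real number by the Julia--Carath\'eodory theorem (applicable since $\phi\in C^2(1)$ forces angular differentiability at the boundary fixed point $1$), so that (\ref{RHPR}) gives
\begin{equation*}
\Re(\Phi(w))=\frac{\Re(w)}{p}+K+\Re(\Gamma(w)),\qquad K:=\Re\!\left(\frac{1}{p}-1+\frac{\phi''(1)}{p^2}\right),
\end{equation*}
with $K>0$ by the hypothesis (\ref{LBC}) and $\Gamma(w)=o(1)$ as $|w|\to\infty$ in $\Pi$. Next I would verify strict positivity of $\Re(\Phi)$ on the closure $\overline{\Pi}\setminus\{\infty\}$: on $\Pi$ itself this is immediate from $\Phi(\Pi)\subseteq\Pi$; and for each $y\in\R$ the point $T^{-1}(iy)$ lies in $\bdu\setminus\{1\}$, so the hypotheses that $\phi$ is continuous on $\U^-$ and $|\phi(\zeta)|<1$ for $\zeta\in\bdu\setminus\{1\}$ force $\Phi(iy)\in\Pi$ and hence $\Re(\Phi(iy))>0$ for every real $y$.

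These two ingredients would then be combined by a compactness argument. Choose $R$ large enough that $|\Re(\Gamma(w))|<K/2$ whenever $|w|\ge R$; since $\Re(w)/p\ge 0$ on $\overline{\Pi}$, the displayed identity yields $\Re(\Phi(w))\ge K/2$ on $\{w\in\overline{\Pi}:|w|\ge R\}$. On the compact set $\{w\in\overline{\Pi}:|w|\le R\}$ the continuous function $\Re(\Phi)$ is strictly positive and therefore attains a positive minimum $c_1$. Taking $c=\min(c_1,K/2)$ delivers (\ref{BBC}) and so the desired horocyclic conclusion.

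I expect the boundary-positivity step to be the main obstacle: the asymptotic (\ref{RHPR}) supplies a lower bound only near infinity, so without an independent argument that $\Re(\Phi(iy))>0$ at every finite $y$, the compactness step collapses, since an approach of $\Phi(iy_n)$ to the imaginary axis for a bounded sequence $y_n$ would prevent any uniform positive bound. This is precisely where the hypotheses of continuity on $\U^-$ and $|\phi(\zeta)|<1$ off the point $1$ are used; weakening them to a mere cluster-set condition at $1$ would not suffice for the uniform lower bound $c>0$.
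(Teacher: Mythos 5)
Your proposal is correct and follows essentially the same route as the paper: pass to the right halfplane via $T$, use the representation (\ref{RHPR}) together with (\ref{LBC}) to get a uniform lower bound on $\Re(\Phi)$ for $|w|$ large, and use continuity of $\phi$ on $\U^-$ plus the hypothesis $|\phi(\zeta)|<1$ off $1$ to handle the bounded region. The paper phrases the second half as a contradiction with a minimizing sequence $(w_n)$ rather than your direct compactness/minimum argument, but the content is identical.
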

\begin{proof}   Suppose  that (\ref{LBC}) holds.    Set $\Phi:=T\circ\phi \circ T^{-1}$.  We know that the proposition follows if the real part of $\Phi$ is bounded below by a positive number.  Suppose, in order to obtain a contraction that $\Re(\Phi)$ is not bounded below, so that there is a sequence $(w_n)$ in the  right halfplane for which $\Re(\Phi(w_n)) \rightarrow 0$.   

Since $\Phi$ has the representation (\ref{RHPR}) and (\ref{LBC}) holds, we see that $(w_n)$ must be a bounded sequence.  Thus $(w_n)$  has a limit point $b$ in the closure of the right halfplane.  Because $\phi$ is continuous on $\U^-$, $\Phi$ is continuous at $b$ and $\Re(\Phi(b)) = 0$.   Note that $\Re(\Phi(b)) = 0$ tells us that $b$ must be on the imaginary axis.   Hence, $\phi(T^{-1}(b)) = T^{-1}(\Phi(b))$ belongs to $\partial \U\setminus \{1\}$, contradicting the hypothesis that $|\phi(\zeta)|<1$ for $\zeta\in \partial \U\setminus \{1\}$. 
\end{proof}

Let $\phi$ be an essentially linear fractional selfmap of $\U$ with $\phi(1) = 1$ (so that $\phi$ is horocyclic at $1$).  Let $p= \phi'(1)$ and  $a = \phi''(1)$.   Because $\phi\in C^3(1)\subseteq C^2(1)$, the selfmapping $\Phi:=T\circ \phi \circ T^{-1}$ of $\Pi$ has the representation (\ref{RHPR}).    As we discussed above, because $\phi$ is horocyclic at $1$, (\ref{RHPR}) shows that there is a positive constant $c$ such that $\Re\left(\frac{1}{p} - 1 + \frac{a}{p^2}\right) \ge c > 0$.   Note in particular that if $p = 1$ ($\phi$ is parabolic type) then we must have $\Re(a) > 0$, so that $\phi$ is of nonautomorphism type by the Parabolic Type Test.    Thus, of the two types of parabolic selfmappings of $\U$, only those of nonautomorphism type can be essentially linear fractional.  Because $\Re\left(\frac{1}{p} - 1 + \frac{a}{p^2}\right) \ge 0$, 
 $$
 \Psi(w):= w/p + \left( \frac{1}{p} - 1 + \frac{a}{p^2}\right)
 $$
  is a selfmap of $\Pi$.  Theorem 7.6 of \cite{BLNS} shows that if $\psi$ is the linear-fractional map of $\U$ defined by $T^{-1}\circ \Psi \circ T$, then $C_\phi- \C_\psi$ is a compact operator on $H^2(\U)$.

  We conclude with three concrete examples of essentially linear fractional selfmappings of $\U$.

\begin{example}\label{EIHT} Let $\phi(z) = 2/(\sqrt{13-4z}-1)$  (where $\sqrt{\cdot}$ represents the principal branch of the square root function---all roots in this paper should be viewed as principal ones).     The function $z\mapsto 13-4z$ takes $\U$ to the disc centered at 13 of radius 4 and thus $z\mapsto |\sqrt{13-4z}-1|$ attains its absolute minimum value 2 on $\U^-$ at $z=1$ and only at $z=1$.  Thus $\phi$ is a selfmap of $\U$ such that $\phi(1) = 1$, $\phi^{-1}(\{1\}) = \{1\}$, and $\phi\in C^3(1)$ (in fact $\phi$ is analytic on $\U^-$).    One may apply Proposition~\ref{ITSP} to see that $\phi(\U)$ belongs to  proper subdisc of $\U$ internally tangent to $\U$ at $1$.  Thus $\phi$ is an essentially linear fractional selfmap of $\U$.   Since $\phi(1) = 1$ and $\phi'(1) = 1/3$, $\phi$ is of hyperbolic type.
\end{example}

\begin{example}\label{ELFSQE}  Constructing examples of essentially linear fractional selfmaps $\phi$ is quite simple using the representations like (\ref{RHPR}).  For instance, choosing $p = 1/2$, $a=1$ and $\Gamma(w) = 2/(w+1)$, we obtain $\phi(z) = T^{-1}(\Phi(T(z))) = \frac{2z^2-3z+3}{2z^2-7z+7}$ as an essentially linear fractional selfmap of $\U$ such that $\phi(1) = 1$ and $\phi'(1) = 2$.  That $\phi$ is horocyclic at $1$ is clear  from the form of $\Phi$, which shows $\Re(\Phi(w)) \ge 1$ for every $w\in \Pi$.  The form of  $\Phi$ also makes it clear that $\phi^{-1}(\{1\}) = \{1\}$.    Finally, note that $\phi$ must be of dilation type since $\phi'(1) > 1$ (in fact, $\phi(1/2) = 1/2$.)  We note that essentially linear fractional mappings need not be univalent on $\U$; in fact, this is the case for the the rational mapping $\phi$ we just constructed (its derivative vanishes at $0$).  
\end{example}

\begin{example}\label{EIPC}  For $w\in \Pi$, let 
$$
\Phi(w) = w + 2 +i  - \frac{1}{4(w+1)} - \frac{1}{4(w+1)^{3/2}}.
$$
  Note that $\Phi$ is a self-map of $\Pi$ and that, in fact, $\Re(\Phi(w)) > 3/2$ for every $w\in \Pi$ (since $|1/(4(w+1)) + 1/(4(w+1)^{3/2})| <1/2$ for $w\in \Pi$).  It follows that  
$$
\phi(z) = T^{-1}(\Phi(T(z))) = \frac{z^2 - (2-8i)z-(15+8i) + 2^{-1/2}(1-z)^{5/2}}{z^2+(14+8i)z-(31+8i) + 2^{-1/2}(1-z)^{5/2}}
$$
is an analytic selfmap of $\U$ that is horocyclic at $1$ (in fact, since $\Re(\Phi(w)) > 3/2$, the set $\phi(\U)$ must be contained in $T^{-1}(\{w: \Re(w) > 3/2\})$, which is the horodisc  $H(1,2/3)$ (having radius $2/5$ and center $3/5$)---see the figure below).   The form of  $\Phi$ ensures that $\phi$ is $C^{3+\epsilon}(1)$ for  $0 \le \epsilon < 1/2$ (see \cite[pp. 50--51]{BoS}) and $\phi^{-1}(\{1\}) = \{1\}$.  Since $\phi'(1) = 1$ (and $\phi''(1) = 2+i$ has positive real part) we see that $\phi$ is an essentially linear fractional selfmap of $\U$ of parabolic nonautomorphism type.  
\end{example}
\begin{figure}[h]
\psfrag{H}{{\small $\partial H(1,2/3)$}}
\psfrag{F}{{$\phi(\U)$}}
  \includegraphics[height=2in]{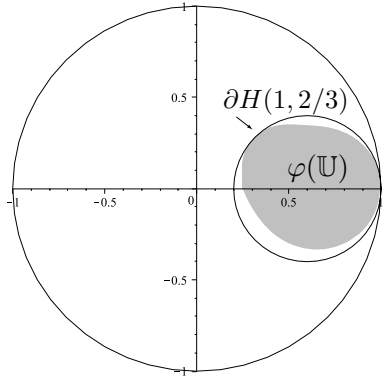}
 \caption{ The image of the unit disk under the essentially linear fractional mapping $\phi$ of Example~\ref{EIPC}} 
  \end{figure} 
  
  \subsection{Adjoints of Composition operators}\label{ACOS}  Recall that for $h\in H^{\infty}(\U)$,  $M_h$ denotes the operator of multiplication by $h$ on  $H^2(\U)$.  We let $B = (M_z)^*$ so that $B$ is the backward shift operator on $H^2(\U)$.  
  
     Let $\phi(z) = (az + b)/(cz+d)$ be a linear-fractional selfmap of $\U$.  Cowen \cite{Cow} derives the following formula for the adjoint of $C_\phi$:

\begin{equation}\label{CAF}
  C_\phi^* = M_gC_\sigma M_h^* 
\end{equation} 
  where $g(z) = 1/(-\bar{b}z + \bar{d})$, $h(z) = cz + d$, and $\sigma(z)= (\bar{a}z - \bar{c})/(-\bar{b}z +\bar{d})$.    Analogs of this adjoint formula have been developed for rational selfmappings of $\U$ (see, e.g.,  \cite{CG}, \cite{HMR}, and \cite{BSA}).     We need the following variant of (\ref{CAF}) (see \cite[Equation (2)]{HMR} or \cite[Equation (18)]{BSA}):
  \begin{equation}\label{ORFM}
  C_\phi^* =      \Lambda_0 + M_{z\sigma'} \, C_{\sigma} \, B,
\end{equation}
  where $\Lambda_0$ is the rank-one operator defined by $(\Lambda_0 f ) = f(0)/(1-\overline{\phi(0)}z)$.

  \subsection{Spectrum and Essential Spectrum}\label{SESS}   For $T$ an operator on a Hilbert space $H$, let $\Sp(T)$ denote the spectrum of $T$ and $\Sp_e(T)$ denote the essential spectrum.   We will make use of the following well-known fact from Fredholm theory:  any point $\lambda$ belonging to both the spectrum of $T$ and unbounded component of the complement of the essential spectrum must be an eigenvalue of $T$.    The argument is simple.    Because the Fredholm index is continuous on the complement of the essential spectrum and has value zero on the complement of the spectrum,  the index is zero on the unbounded component of the complement of the essential spectrum.  Thus, whenever $\lambda$ belongs to both $\Sp(T)$ and the unbounded component of the complement of $\Sp_e(T)$, then the Fredholm index of $T -\lambda I$ is zero.  Thus for such a $\lambda$, the operator $T - \lambda I$ has closed range and has kernel and co-kernel of the same dimension; it follows that the kernel must have positive dimension, for otherwise $C_\phi - \lambda I$ would be invertible.

 \section{Some General Results on Composition-Operator Spectra}\label{GRCOS}
 
 We begin with our most general result.

  \begin{prop}\label{OFP}   Let $\phi$ be a selfmap of $\U$ having Denjoy-Wolff point $\dw$ in $\partial \U$.  Suppose that  $\lambda$ is an eigenvalue of $C_\phi:H^2\rightarrow H^2$ having a corresponding eigenvector that is an outer function.   Then $C_\phi - \lambda I$ has dense range.
 \end{prop}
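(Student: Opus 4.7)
The natural route is by duality: $C_\phi - \lambda I$ has dense range if and only if $(C_\phi - \lambda I)^* = C_\phi^* - \bar\lambda I$ is injective, so I will fix $g \in H^2$ with $C_\phi^* g = \bar\lambda g$ and aim to show $g = 0$. Let $f$ be the given outer eigenfunction, $C_\phi f = \lambda f$. Since outer functions are zero-free in $\U$ and $\phi$ is nonconstant (its Denjoy--Wolff point lies on $\partial\U$), $f\circ\phi$ is zero-free, so $\lambda\ne 0$.

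The key identity comes from iterating the adjoint relation: for every $h \in H^2$ and every $n \ge 0$,
$$
\langle C_\phi^n h,\, g\rangle = \langle h,\, (C_\phi^*)^n g\rangle = \lambda^n\langle h, g\rangle.
$$
Apply this with $h = z^k f$ for $k\ge 0$. Because $C_\phi^n f = \lambda^n f$, one has $C_\phi^n(z^k f) = (\phi^{[n]})^k (f\circ \phi^{[n]}) = \lambda^n (\phi^{[n]})^k f$, and cancelling $\lambda^n \ne 0$ gives
$$
\langle (\phi^{[n]})^k f,\, g\rangle = \langle z^k f,\, g\rangle \quad\text{for all } n,k \ge 0.
$$
Now let $n \to \infty$. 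By the Denjoy--Wolff theorem $\phi^{[n]}(z) \to \omega$ pointwise in $\U$, while $\|\phi^{[n]}\|_\infty \le 1$, so $(\phi^{[n]})^k f \to \omega^k f$ pointwise in $\U$ and is uniformly bounded in $H^2$-norm by $\|f\|_{H^2}$. A bounded, pointwise-convergent sequence in $H^2$ converges weakly, so
$$
\langle z^k f,\, g\rangle = \omega^k\langle f, g\rangle \quad\text{for every } k \ge 0.
$$

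Set $c \eqdef \langle f, g\rangle$. The crux of the argument is now to show $c=0$. Since $f, g \in H^2$, the product $f\bar g$ lies in $L^1(\partial\U)$, and the computation above says that its negative Fourier coefficients are
$$
\widehat{f\bar g}(-k) = \frac{1}{2\pi}\int_0^{2\pi} e^{ik\theta} f(e^{i\theta})\overline{g(e^{i\theta})}\, d\theta = \langle z^k f, g\rangle = c\,\omega^k, \quad k\ge 0.
$$
The Riemann--Lebesgue lemma forces $|\widehat{f\bar g}(-k)| = |c| \to 0$, so $c = 0$. Then $\langle z^k f, g\rangle = 0$ for every $k \ge 0$; since $f$ is outer, $\{z^k f: k\ge 0\}$ has dense linear span in $H^2$, hence $g = 0$. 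This shows $(C_\phi^* - \bar\lambda I)$ is injective and so $C_\phi - \lambda I$ has dense range.

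The mechanical parts (duality, iteration, weak convergence from pointwise convergence) are routine; the step I expect to do the real work is the Riemann--Lebesgue application, which is what genuinely uses $|\omega|=1$ (and thus the Denjoy--Wolff point being on $\partial\U$). If $\omega$ were in $\U$, then $c\,\omega^k \to 0$ automatically and no information about $c$ would be extracted, which correctly matches the fact that a compact composition operator (e.g., $\phi$ with DW point in $\U$) typically has eigenvalues belonging to the compression spectrum.
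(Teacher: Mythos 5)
Your proof is correct, and while it shares the overall skeleton of the paper's argument (pass to the adjoint, pair an eigenvector $g$ of $C_\phi^*$ against polynomial multiples of the outer eigenfunction, and finish with Beurling's theorem), the mechanism by which you exploit $|\dw|=1$ is genuinely different. The paper inserts the vanishing factor $\dw - z$, i.e.\ it tests against $z^n(\dw-z)f(z)$, and then invokes a nontrivial external result (Theorem 3.1 of \cite{BMS}, that $\phi^{[k]}\to\dw$ in $H^2(\U)$-norm) to extract a subsequence converging a.e.\ on $\partial\U$; dominated convergence then kills the inner products because $\dw-\phi^{[j_k]}\to 0$ a.e.\ on the circle. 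You instead need only the classical Denjoy--Wolff theorem (pointwise convergence of $\phi^{[n]}$ in the open disc), upgrade it to weak $H^2$-convergence of $(\phi^{[n]})^k f$ by boundedness, obtain $\langle z^k f, g\rangle = \dw^k\langle f,g\rangle$, and then let the Riemann--Lebesgue lemma applied to $f\bar g\in L^1(\partial\U)$ force $\langle f,g\rangle=0$; this is where $|\dw|=1$ does its work, as you correctly observe. Your route is more elementary and self-contained---it avoids any boundary convergence theory for the iterates---while the paper's route generalizes more readily to situations where one wants the stronger a.e.\ boundary information about the orbit (as in the proof of Proposition~\ref{HCP}, where a quantitative version of the same idea via Julia's theorem is used). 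One small remark: you explicitly justify $\lambda\ne 0$ from the zero-freeness of the outer eigenfunction, a point the paper leaves implicit but which is needed to cancel $\lambda^n$ in either argument.
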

 
 \begin{proof}  Let $\lambda\in \C$ be an eigenvalue of $C_\phi$ with eigenvector $g$ being an outer function.  We show that $\overline{\lambda}$ is not an eigenvalue of $C_\phi^*$ to complete the proof of the proposition.
 
 Suppose, in order to obtain a contradiction, that $\overline{\lambda}$ is an eigenvalue of $C_\phi^*$ with corresponding eigenvector $h$.  Observe that  for any nonnegative integer $n$ and any positive integer $k$, we have
\begin{equation}\label{HCED}
\begin{split}
\lambda^k\langle z^n(\dw-z)g(z), h\rangle & =    \langle z^n(\dw-z)g(z), (C_{\phi}^*)^k h\rangle\\
    & =    \lambda^k \langle (\phi^{[k]})^n(\dw-\phi^{[k]})g,  h\rangle.
    \end{split}
\end{equation}
By Theorem 3.1 of \cite{BMS}, the iterate sequence  $(\phi^{[k]})$ converges to $\dw$ in the norm of $H^2(\U)$ and it follows that some subsequence $(\phi^{[j_k]})$ converges a.e.\ on $\partial \U$ to $\dw$. Because $|(\phi^{[j_k]})^n(\dw-\phi^{[j_k]})g\bar{h}| \le 2|g\bar{h}|\in L^1(\partial \U)$ and $(\phi^{[j_k]})$ converges to $\dw$ a.e. on $\partial \U$,  Lebesgue's Dominated Convergence Theorem shows that 
$\lim_{k\rightarrow \infty} \langle (\phi^{[j_k]})^n(\dw-\phi^{[j_k]})g,  h\rangle = 0$.  Thus, by (\ref{HCED}), we conclude that
$$
\langle z^n(\dw-z)g(z), h\rangle  = 0
$$
for all $n\ge 0$.  Since $z\mapsto(\dw-z)g(z)$ is outer (being a product of outer functions), the preceding equation shows that $h$ is orthogonal to the polynomial multiples of an outer function, a set which is a dense set in $H^2(\U)$ by Beurling's Theorem. Thus $h \equiv 0$, a  contradiction that completes the proof of the proposition.\end{proof}

We present two applications of the preceding proposition, the first stated as a corollary.

\begin{cor}  Suppose that $\phi$ is a self-mapping of $\U$ of hyperbolic type with Denjoy-Wolff point $\dw$.  Then $(C_\phi - \lambda I)$ has dense range whenever 
$\lambda$ belongs to the annulus $A:= \{z: \phi'(\dw)^{1/2} < |z| < \phi'(\dw)^{-1/2}\}$.  
\end{cor}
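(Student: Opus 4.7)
The plan is to exhibit, for every $\lambda \in A$, an eigenvector of $C_\phi$ at eigenvalue $\lambda$ that is an outer function; Proposition~\ref{OFP} then furnishes the dense range of $C_\phi - \lambda I$. So the work reduces to proving that the annulus $A$ sits inside the point spectrum of $C_\phi$ and that the corresponding eigenvectors can be taken to be outer.

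The eigenvectors come from a Valiron (hyperbolic Königs) function. After rotating so that $\dw = 1$, let $s = \phi'(\dw) \in (0,1)$, $T(z) = (1+z)/(1-z)$, and $\Phi = T\circ\phi\circ T^{-1}$, a self-map of the right half-plane $\Pi$ with $\Phi(w) = w/s + O(1)$ near $\infty$. The classical normal limit
\[
V(w) = \lim_{n\to\infty} s^{n}\,\Phi^{[n]}(w)
\]
exists and defines an analytic map $V: \Pi \to \Pi$ satisfying $V\circ\Phi = (1/s)V$. Setting $\sigma = V\circ T$, I get an analytic function $\sigma:\U \to \Pi$ that is nowhere zero on $\U$ and satisfies $\sigma\circ\phi = (1/s)\,\sigma$. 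For each complex $c$, the composite $\sigma^{c} = \exp(c\log\sigma)$ (using the principal branch of $\log$ on $\Pi$) is then a well-defined analytic function on $\U$, nonvanishing, and
\[
C_\phi(\sigma^{c}) = (\sigma\circ\phi)^{c} = (1/s)^{c}\sigma^{c} = s^{-c}\sigma^{c}.
\]
So with the identification $\lambda = s^{-c}$ (equivalently $c = -\log\lambda/\log s$), the condition $|\lambda| \in (s^{1/2},\,s^{-1/2})$ translates precisely to $\mathrm{Re}(c) \in (-1/2,\,1/2)$.

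Next I would verify that for such $c$ the eigenvector $\sigma^{c}$ actually lies in $H^2(\U)$ and is outer. Boundary analysis of $V$ gives $|\sigma(z)|$ comparable to $|1-z|^{-1}$ as $z \to 1$ (with $\sigma$ bounded on compact arcs of $\partial\U\setminus\{1\}$), so $|\sigma(e^{it})|^{2\mathrm{Re}(c)}$ behaves like $|t|^{-2\mathrm{Re}(c)}$ near $t=0$; the constraint $\mathrm{Re}(c) < 1/2$ makes the corresponding integral converge. For the lower exponent bound, $\mathrm{Re}(\sigma) > 0$ together with the lower control $|\sigma(e^{it})|$ bounded away from $0$ away from $t=0$ (inherited from $V\circ T$ and the fact that $\sigma$ omits the left half-plane) handles the case $\mathrm{Re}(c) \le 0$. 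Since $\sigma$ is zero-free on $\U$ and $\log|\sigma^{c}|$ equals the Poisson integral of its boundary values (both sides being harmonic and agreeing non-tangentially a.e.), $\sigma^{c}$ is outer. Thus every $\lambda \in A$ is an eigenvalue of $C_\phi$ with an outer eigenvector, and Proposition~\ref{OFP} completes the argument.

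The main obstacle is the third paragraph: the sharp $H^2$-membership of $\sigma^{c}$ at exponents $\mathrm{Re}(c) \in (-1/2,\,1/2)$ and the outerness assertion require quantitative control of the Valiron function's boundary behavior at $\dw$. These ingredients are classical (see, e.g., \cite{CMB,Sh2}) but are genuinely the technical heart of the corollary; everything else is either the functional equation bookkeeping above or a direct invocation of Proposition~\ref{OFP}.
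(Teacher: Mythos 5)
Your overall strategy matches the paper's: produce, for each $\lambda$ in the annulus, an outer eigenfunction of $C_\phi$ and invoke Proposition~\ref{OFP}. But the step you yourself flag as ``the technical heart'' --- $H^2$-membership and outerness of $\sigma^c$ --- is where your argument has genuine gaps, and the claims you make there are not justified in the generality of the corollary. First, the asserted boundary comparability $|\sigma(z)|\asymp|1-z|^{-1}$ near $\dw$, and the boundedness of $\sigma$ on arcs away from $\dw$, are properties of the Valiron function that do not follow from the hypotheses: $\phi$ is an \emph{arbitrary} hyperbolic-type selfmap (no continuity on $\U^-$, no smoothness at $\dw$ beyond the angular derivative), and the known asymptotics of $V$ are non-tangential at $\infty$ only; $V$ can behave badly at other boundary points of $\Pi$. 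Second, your outerness argument is circular: for a zero-free $H^2$ function, ``$\log|\sigma^c|$ equals the Poisson integral of its boundary values'' \emph{is} the definition of outer, and ``both sides are harmonic and agree non-tangentially a.e.'' does not establish equality (the Poisson kernel itself is a nonzero harmonic function with a.e.\ boundary values $0$).

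Both problems evaporate if you use the one observation your setup is missing: since $\sigma=V\circ T$ maps $\U$ into the right half-plane, $\tau:=T^{-1}\circ\sigma$ is a selfmap of $\U$ and $\sigma^c=f_c\circ\tau$ where $f_c(z)=\left(\frac{1+z}{1-z}\right)^c$. Then $\sigma^c\in H^2(\U)$ for $-1/2<\Re(c)<1/2$ simply because $f_c\in H^2(\U)$ and $C_\tau$ is bounded (Littlewood), and $\sigma^c$ is outer because $f_c$ is outer (both $f_c$ and $1/f_c$ lie in $H^2$) and composition operators preserve outer functions. This is exactly the paper's route: it quotes Cowen's Theorem 4.5 of \cite{Cow2} for the existence of a selfmap $\sigma$ of $\U$ with $f_b\circ\sigma$ an eigenvector for eigenvalue $\phi'(\dw)^b$, and then the two cited facts about $f_b$ and about preservation of outer functions finish the argument with no boundary analysis at all. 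With that repair your proof becomes correct and essentially identical to the paper's; without it, the third paragraph does not go through.
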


\begin{proof} Let $b$ be such that  $-1/2 < \Re(b) < 1/2$.  Then it is easy to see that $f_b(z)  := \left(\frac{1+z}{1-z}\right)^b$ belongs to $H^2(\U)$. Moreover, $f_b$ is outer (because, e.g., both $f_b$ and $1/f_b$ are in $H^2(\U)$).   Cowen \cite[Theorem 4.5]{Cow2} proves that there is a selfmap $\sigma$ of $\U$ such that $f_b\circ \sigma$ is an eigenvector for $C_\phi$ with corresponding eigenvalue $\phi'(\dw)^b$.  Because $b$ is an arbitrary complex number with $\Re(b)\in (-1/2,1/2)$, it follows that every point in the annulus $A$ is an eigenvalue of $C_\phi$ with corresponding eigenfunction having the form $f_b\circ \sigma$ for an appropriate $b$.   Since every composition operator preserves the collection of outer functions (see, e.g.,  \cite[Section 2.7]{CKS}), $f_b\circ \sigma$ is outer and the corollary follows.\end{proof}

Our second application of Proposition~\ref{OFP} concerns a family of composition operators introduced by Cowen \cite[Section 6]{Cow2}.  
Let  $T_{\theta}(z) = \left(\frac{1+z}{1-z}\right)^{2\theta/\pi}$ so that for $ 0 < \theta \le \pi/2$, $T_{\theta}$ maps $\U$ univalently into the right halfplane $\Pi$ and $T_{\pi/2} = T$ maps $\U$ onto $\Pi$.   For $a\ne 0$  such that $|\arg(a)| < \theta\le \pi/2$, define the selfmap $\phi_{a, \theta}$ of $\U$ by 
$$
\phi_{a,\theta} (z) = (T_{\theta})^{-1}\circ (T_{\theta}+a).
$$
\begin{prop} The compression spectrum of $C_{\phi_{a, \theta}}$ is empty, i.e., $(C_{\phi_{a,\theta}} - \lambda I)$ has dense range for every $\lambda$.  
\end{prop}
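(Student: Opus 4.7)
The plan is to apply Proposition~\ref{OFP} by exhibiting an outer eigenvector for every nonzero $\lambda$ in the spectrum of $C_{\phi_{a,\theta}}$, together with a separate direct argument at $\lambda=0$.   First observe that $\phi_{a,\theta}=T_\theta^{-1}\circ(T_\theta+a)$ has Denjoy-Wolff point $\dw=1\in\partial\U$ (because $T_\theta^{-1}(\infty)=1$ and the translation $W\mapsto W+a$ drives iterates to $\infty$), so the hypothesis of Proposition~\ref{OFP} on the location of the Denjoy-Wolff point is satisfied.   The identity $T_\theta\circ\phi_{a,\theta}=T_\theta+a$ shows that for each $t\ge 0$ the function
\[
f_t(z):=\exp\bigl(-tT_\theta(z)\bigr)
\]
lies in $H^\infty(\U)\subseteq H^2(\U)$ (since $\Re T_\theta\ge 0$) and is an eigenvector of $C_{\phi_{a,\theta}}$ with eigenvalue $e^{-at}$.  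Cowen's analysis in \cite[Corollary 6.2]{Cow2} identifies precisely these $\lambda=e^{-at}$ as the nonzero part of $\sigma(C_{\phi_{a,\theta}})$, and any $\lambda$ off the closed spiral $\{e^{-at}:t\ge 0\}\cup\{0\}$ lies in the resolvent and so gives dense range trivially.

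For $0<\theta<\pi/2$ I would verify that $f_t$ is outer and invoke Proposition~\ref{OFP}.  The function $f_t$ never vanishes on $\U$, so any nontrivial inner factor must be singular.  The boundary estimate $|T_\theta(e^{i\varphi})|=|\cot(\varphi/2)|^{2\theta/\pi}$ is integrable when $2\theta/\pi<1$, so $T_\theta\in H^1(\U)$ and the $H^1$ mean-value identity yields $\int_{\partial\U}\Re T_\theta\,dm=\Re T_\theta(0)=1$.  Therefore
\[
\int_{\partial\U}\log|f_t|\,dm = -t = \log|f_t(0)|,
\]
confirming that $f_t$ has trivial singular inner factor.  Proposition~\ref{OFP} now delivers dense range of $C_{\phi_{a,\theta}}-e^{-at}I$ for every $t\ge 0$.

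The case $\theta=\pi/2$ is the main obstacle:  here $T\notin H^1(\U)$, the mean-value identity above fails, $|f_t|=1$ almost everywhere on $\partial\U$, and $f_t$ is in fact a singular inner function, so Proposition~\ref{OFP} does not apply.  Because $\phi_{a,\pi/2}$ is linear fractional, I would use Cowen's adjoint formula~(\ref{CAF}) to write $C_{\phi_{a,\pi/2}}^*=M_g C_\sigma M_h^*$ with $\sigma=\phi_{\bar a,\pi/2}$ and translate the eigen-equation $C_{\phi_{a,\pi/2}}^* h=\bar\lambda h$ to the right halfplane via $H=h\circ T^{-1}$.   Evaluation at $z=0$ forces either $\lambda=1$ (handled by showing the resulting $h$ would be a scalar multiple of $K_1\notin H^2$) or $h(0)=0$; in the latter case, writing $h=zh_1$ and using the identity $(2+\bar a)-\bar a z=2(W+1+\bar a)/(W+1)$, the equation collapses to $F(W+\bar a)=\bar\lambda F(W)$ on $\Pi$ for $F(W):=H_1(W)/(W+1)^2$.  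Since holomorphic $\bar a$-periodic functions on $\Pi$ are constant, the only formal solutions give $h(z)=cz(1-z)^{-2}\exp(-sT(z))$ with $e^{-\bar a s}=\bar\lambda$, whose boundary modulus behaves like $|1-e^{i\varphi}|^{-2}$ and hence is not square-integrable.  Thus $C_{\phi_{a,\pi/2}}^*$ has no nonzero eigenvectors.

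Finally, for $\lambda=0$ I would observe that $\mathrm{Range}(C_{\phi_{a,\theta}})$ contains $K_\alpha\circ\phi_{a,\theta}=\sum_n\bar\alpha^n\phi_{a,\theta}^n$ for every $\alpha\in\U$.  Because $\phi_{a,\theta}$ is univalent with image touching $\partial\U$ only at $1$, the family $\{K_\alpha\circ\phi_{a,\theta}:\alpha\in\U\}$ is dense in $H^2(\U)$ by the identity theorem applied to the open set $\phi_{a,\theta}(\U)$, so $C_{\phi_{a,\theta}}^*$ is injective and $0$ is not in the compression spectrum.  The main obstacle throughout is the LFT case $\theta=\pi/2$, where the natural eigenvector is inner and one must resort to explicit halfplane computation to rule out $H^2$ solutions of the adjoint eigen-equation.
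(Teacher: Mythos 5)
There is a genuine gap in the case $0<\theta<\pi/2$. Cowen's result, as quoted in the paper, gives $\Sp(C_{\phi_{a,\theta}})=\{e^{-\beta a}:|\arg\beta|\le \pi/2-\theta\}\cup\{0\}$; for $\theta<\pi/2$ this is the image of a genuinely two-dimensional sector of $\beta$'s, not merely the curve $\{e^{-at}:t\ge 0\}$. Your eigenvectors $f_t=e^{-tT_\theta}$ with $t\ge 0$ real account only for the eigenvalues on that curve, so your argument establishes nothing at the spectral points $e^{-\beta a}$ with $\beta$ non-real, and the claim that every other $\lambda$ "lies in the resolvent" is false. The repair is exactly what the paper does: for every $\beta\ne 0$ with $|\arg\beta|\le\pi/2-\theta$ one has $\Re(\beta T_\theta(z))>0$ on $\U$ (since $T_\theta$ maps $\U$ into the sector of half-angle $\theta$), so $F=e^{-\beta T_\theta}\in H^\infty(\U)$ is an eigenvector for $e^{-\beta a}$, and your $H^1$ mean-value computation, run with $\Re(-\beta)$ in place of $-t$, shows $F$ is outer; Proposition~\ref{OFP} then applies to all nonzero spectral points.

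For $\theta=\pi/2$ your overall strategy (rule out eigenvectors of $C_{\phi_{a,\pi/2}}^*$) would suffice if carried out, but the execution rests on a false step: holomorphic $\bar a$-periodic functions on the right halfplane need not be constant (consider $W\mapsto e^{2\pi iW/\bar a}$), so the claimed classification of solutions of $F(W+\bar a)=\bar\lambda F(W)$, and hence of the putative adjoint eigenvectors, is not justified. The paper avoids all of this with a short direct argument: writing $S=e^{-T}$ for the atomic singular inner function, one has $(C_{\phi_{a,\pi/2}}-e^{-\beta a}I)S^{t}=(e^{-ta}-e^{-\beta a})S^{t}$ for all $t\ge 0$, so the closure of the range contains every power $S^{t}$, and the span of these powers is dense in $H^2(\U)$ by \cite[Lemma 4.2]{GM}. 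Finally, your treatment of $\lambda=0$ is circular as written (orthogonality to the functions $K_\alpha\circ\phi_{a,\theta}$ is by definition injectivity of the adjoint, and the "identity theorem" does not convert boundary contact of $\phi_{a,\theta}(\U)$ into density); the paper instead observes that $\phi_{a,\theta}(\U)$ is a Jordan domain and invokes Walsh's theorem to conclude that the polynomials in $\phi_{a,\theta}$, hence the range of $C_{\phi_{a,\theta}}$, are dense.
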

\begin{proof}   Let $\theta\in (0, \pi/2]$ and let $a\ne 0$ satisfy $|\arg(a)| < \theta$.  Note that  $\phi_{a, \theta}(\U)$ is a simply connected domain whose boundary is a Jordan curve. Thus, by Walsh's Theorem (see, e.g,, p.\ 11 of \cite{BoS}), the set of polynomials in $\phi_{a, \theta}$ is dense $H^2(\U)$; equivalently, $C_{\phi_{a, \theta}}$ has dense range.     

Cowen proves \cite[Corollary 6.2]{Cow} that 
$$
\Sp(C_{\phi_{a, \theta}}) = \{e^{-\beta a}: |\arg \beta| \le \pi/2- \theta\} \cup \{0\}.
$$
In fact, he shows that every nonzero spectral point is an eigenvalue of $C_{\phi_{a,\theta}}$.  We now examine the eigenfunctions that Cowen exhibits.  Let $\beta\ne 0$ belong to the sector $\{z: \arg(z) \le \pi/2-\theta\}$ so that $e^{-\beta a}$ belongs to the spectrum of $C_{\phi_{a, \theta}}$.   Note that  $\Re(\beta T_\theta(z)) > 0$ for every $z\in \U$ so that $F : = e^{-\beta T_\theta}\in H^\infty(\U)$.  Clearly $F$ is an eigenfunction for $C_{\phi_{a,\theta}}$ with corresponding eigenvalue $e^{-\beta a}$.   We complete the  argument by considering two cases.

Case 1: $\theta < \pi/2$.      We need to show  $C_{\phi_{a,\theta}} - e^{-\beta a}I$ has dense range.  This follows from Proposition~\ref{OFP}, because the eigenfunction $F=    e^{-\beta T_\theta}$ for $e^{-\beta a}$  is outer by the following simple computation:   
\begin{eqnarray*}
\frac{1}{2\pi} \int_{-\pi}^{\pi} \log|F(e^{it})|\, dt & = & \Re\left(\frac{-\beta}{2\pi} \int_{-\pi}^{\pi} T_\theta(e^{it}) \, dt\right)\\
                     						& = & 	\Re\left(-\beta\, T_\theta(0) \right)\\
						& = &\Re(-\beta)\\
						& =& \log|F(0)|,
						\end{eqnarray*}
						where the second equality follows, e.g., from Theorem 3.6 of \cite{Dur}, because $T(z) =\left(\frac{1+z}{1-z}\right)^{2\theta/\pi}$ is an $H^1(\U)$ function (since $2\theta/\pi< 1$).  

Case 2: $\theta = \pi/2$.  In this case, $\beta > 0$ and the eigenfunction $F(z) = e^{-\beta T_\theta(z)} = e^{-\beta (1+z)/(1-z)}$ is a singular inner function; in fact, the unit singular $S(z) = e^{-(1+z)/(1-z)}$ raised to the $\beta$ power.   Let $t$ be any nonnegative real number.  We have $(C_{\phi_{a, \theta}} -  e^{-\beta a}I) S^t= (e^{-ta} - e^{-\beta a})S^t$.  It follows that the closure of the range of $(C_{\phi_{a, \theta}} -  e^{-\beta a}I)$ contains all nonnegative powers of the unit singular function.  Since the linear span of the collection of all nonnegative powers of $S$ is dense in $H^2(\U)$ (see, e.g,  \cite[Lemma 4.2]{GM}),  $(C_{\phi_{a, \theta}} -  e^{-\beta a}I)$ has dense range, as desired.
\end{proof}

\begin{remark} The issue of when $(C_\phi - \lambda I)$ has dense range relates to Question Q1 raised in the Introduction as follows.  Suppose that $\phi$ is a selfmap of $\U$ or hyperbolic or parabolic automorphism type.  Then by      Corollary 4.4 and the remarks following Theorem 4.10 of \cite{Cow2}, every eigenvalue of $C_\phi$ has infinite multiplicity.  Thus if there is a number $\lambda\in \Sp(C_\phi)\setminus \Sp_e(C_\phi)$, then $C_\phi - \lambda I$  must be injective with closed range and it follows that the range of $(C_\phi - \lambda I)$ cannot be dense.  Thus for selfmaps $\phi$ of hyperbolic or parabolic automorphism type, Q1 has an affirmative answer if one can show that for every nonzero $\lambda$, the operator $C_\phi - \lambda I$ has dense range. 
\end{remark}
 
 The remainder of the results in this section serve as lemmas in the next section,  permitting us to characterize the spectrum and essential spectrum of essentially linear fractional composition operators (fixing a point on $\partial \U$).
 
 For $\eta\in \partial \U$ and $\beta > 0$, recall that  $H(\eta, \beta) = \{z: |1- z\bar{\eta}|^2 < \beta(1- |z|^2)\}$ is the open horodisc (of radius $\beta/(1+\beta)$) internally tangent to the unit circle at $\eta$. 
 
 \begin{prop} \label{HCP} Suppose that $\phi$ is a self-map of hyperbolic type that is horocyclic at its Denjoy-Wolff point $\dw \in\partial U$; then $\Sp_e(C_\phi) = \Sp(C_\phi)$.
 \end{prop}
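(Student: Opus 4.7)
The inclusion $\Sp_e(C_\phi) \subseteq \Sp(C_\phi)$ is automatic, so I focus on the reverse. My plan is built on the fact (Cowen, Corollary 4.4 of \cite{Cow2}, cited in the Remark preceding the proposition) that every eigenvalue of a hyperbolic-type $C_\phi$ has infinite multiplicity: any such eigenvalue $\lambda$ therefore lies automatically in $\Sp_e(C_\phi)$, since $\ker(C_\phi - \lambda I)$ is infinite-dimensional and $C_\phi - \lambda I$ cannot be Fredholm. Combining this with Cowen's spectral radius formula $r(C_\phi) = \phi'(\dw)^{-1/2}$ and the closedness of $\Sp_e$, it suffices to show that every point in the punctured open disc $D^{*} := \{\lambda \in \C : 0 < |\lambda| < \phi'(\dw)^{-1/2}\}$ is an eigenvalue of $C_\phi$.

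To produce eigenvectors I use Cowen's linear-fractional model: there is a univalent map $\tau \colon \U \to \Pi$ satisfying Schroeder's equation $\tau\circ\phi = \phi'(\dw)\,\tau$. Whenever $\tau^b \in H^2(\U)$ (for an appropriate branch), the function $\tau^b$ is an eigenvector of $C_\phi$ with eigenvalue $\phi'(\dw)^b$, since $C_\phi(\tau^b) = (\tau\circ\phi)^b = \phi'(\dw)^b\,\tau^b$. The key step is to verify that horocyclicity forces $\tau^b \in H^2(\U)$ precisely when $\Re(b) > -1/2$; then $\{\phi'(\dw)^b : \Re(b) > -1/2\} = D^{*}$ and the construction is complete.

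By the discussion preceding Proposition~\ref{ITSP}, horocyclicity at $\dw$ is equivalent to the uniform bound $\Re(\Phi(w)) \ge c > 0$ on $\Pi$, where $\Phi = T\circ\phi\circ T^{-1}$. This forces the boundary cluster set of $\phi$ on $\partial\U$ to reduce to $\{\dw\}$; iterating Schroeder's equation together with this horocyclic lower bound then lets one show that $\tau$ extends continuously to $\partial\U\setminus\{\dw\}$ without boundary zeros there, while having a first-order boundary zero at $\dw$, so $|\tau(e^{it})| \sim |e^{it}-\dw|$ as $e^{it} \to \dw$. A routine Hardy-space computation then yields $\tau^b \in H^2(\U)$ if and only if $\Re(b) > -1/2$, as required.

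The main obstacle is the geometric step: deducing from horocyclicity alone that $\tau$ has a simple boundary zero at $\dw$ and no other boundary zeros. I would carry this out by iterating Schroeder's equation in the form $\tau(\phi^{[n]}(z)) = \phi'(\dw)^n\,\tau(z)$ to compare $\tau$ with its linear-fractional limit model near $\dw$, using the horocyclic constraint on $\Phi$ to exclude boundary accumulations of $\tau$ toward the imaginary axis away from $0$. Once this boundary behavior of $\tau$ is in hand, the Hardy-space estimate is routine and the Fredholm argument of the first paragraph closes the proof.
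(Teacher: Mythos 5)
Your overall strategy --- fill the punctured disc $\{0<|z|<\phi'(\dw)^{-1/2}\}$ with infinite-multiplicity eigenvalues, then invoke the closedness of $\Sp_e$ and the spectral radius formula $r(C_\phi)=\phi'(\dw)^{-1/2}$ --- is logically sound as a reduction, and it is genuinely different from the paper's argument. But the step you yourself flag as ``the main obstacle'' is a real gap, and the specific claim you propose to prove there is both stronger than what you need and not derivable from the hypotheses. Proposition~\ref{HCP} assumes only that $\phi$ is of hyperbolic type and horocyclic at $\dw$: there is no univalence, no continuity on $\overline{\U}$, and no smoothness at any boundary point other than the existence of the angular derivative at $\dw$. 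For such a $\phi$ the model map $\tau$ has no reason to extend continuously to $\partial\U\setminus\{\dw\}$, to be zero-free there, or to satisfy $|\tau(e^{it})|\sim|e^{it}-\dw|$; its boundary behavior away from $\dw$ inherits whatever pathology $\phi$ has, and your plan to ``exclude boundary accumulations'' by iterating Schroeder's equation is a sketch of an intention, not an argument. What you actually need is much weaker and is exactly what horocyclicity delivers via Julia's theorem: if $\phi(\U)\subseteq H(\dw,\beta)$ then $\phi^{[n+1]}(\U)\subseteq H(\dw,\phi'(\dw)^n\beta)$, and passing this estimate to the half-plane model shows that the intertwining map has real part bounded below by a positive constant, i.e.\ its image lies in a horodisc. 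From that single inequality the $H^2$-membership of $\tau^b$ for all $\Re(b)>-1/2$ follows by the same routine computation the paper carries out for $f_b\circ\gamma$ in the proof of Theorem~\ref{IFPELF} (where $\gamma(\U)$ being bounded away from $-1$ is precisely the point). You should replace your boundary-regularity claim with this Julia-theorem estimate; as written, the proof does not close.

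For contrast, the paper's proof of Proposition~\ref{HCP} avoids the model map and the identification of the spectrum entirely. It argues by contradiction: a point $\lambda\in\Sp(C_\phi)\setminus\Sp_e(C_\phi)$ is nonzero and Fredholm of index $0$, cannot be an eigenvalue of $C_\phi$ (infinite multiplicity, as you note), so $\overline{\lambda}$ would have to be an eigenvalue of $C_\phi^*$; this is ruled out by pairing a putative eigenvector against the functions $z^m(1-z)^k$ and using the sup-norm decay $\|1-\phi^{[n+1]}\|_\infty\le\phi'(1)^n\beta$, which is again Julia's theorem applied to the horocyclic hypothesis. That route needs no eigenvector construction at all and no spectral radius formula. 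Your approach, if repaired as above, buys something extra --- it shows the point spectrum fills the punctured disc --- but at the cost of invoking Cowen's model theory for possibly non-univalent maps; the paper's duality argument is more elementary and matches the minimal hypotheses of the proposition more tightly.
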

 \begin{proof}  Without loss of generality, we take $\phi$'s Denjoy-Wolff point to be $1$.  Let $\beta > 0$ be such that $\phi(U)\subseteq H(1, \beta)$.  Then by Julia's Theorem \cite[p.\ 63]{Sh2},  for every $z \in \U$ and positive integer $n$, 
\begin{equation}\label{JTR}
 \left|1-\phi^{[n+1]}(z)\right|^2 < \phi'(1)^n\beta(1-|\phi^{[n+1]}(z)|^2) \le \phi'(1)^n\beta.
\end{equation}
Thus  $\|1-\phi^{[n+1]}\|_\infty \le \phi'(1)^n\beta$.

  Suppose that there is a number $\lambda\in \Sp(C_\phi)$ but not in $\Sp_e(C_\phi)$.   Since the only Fredholm composition operators are the invertible ones induced by automorphisms of $\U$ (\cite{CTW}), we know $\lambda \ne 0$.     Since $\phi$ is  of hyperbolic type, every eigenvalue of $C_\phi$ has infinite multiplicity (\cite[Corollary 4.4]{Cow2}).  Thus $\lambda$  cannot be an eigenvalue of $C_\phi$; i.e., $C_\phi - \lambda I$ is injective.    We arrive at a contradiction by showing that $\bar{\lambda}$ also cannot be an eigenvalue of $C_\phi^*$, which shows that $C_\phi - \lambda I$ has dense range and hence is surjective since its range must be closed.   
  
  Suppose,  in order to obtain a contradiction, that  $\bar{\lambda}$ is an eigenvalue of $C_\phi^*$ with corresponding eigenvector $g$.  Because $\lambda \ne 0$, we may choose the positive integer $k$ such that $\phi'(1)^k < |\lambda|$.  Because $z\mapsto (1-z)^k$ is an outer function and $g\in H^2(\U)$ is nonzero function, there is a nonnegative integer $m$ such that $\langle z^m(1-z)^k, g\rangle \ne 0$.  For each nonnegative integer $n$, we have
  \begin{eqnarray*}
  |\lambda|^{n+1} |\langle z^m(1-z)^k, g\rangle| & = &   |\langle z^m(1-z)^k, (C_\phi^*)^{n+1} g\rangle|\\
                                                                          & = &| \langle  (\phi^{[n+1]})^m(1-\phi^{[n+1]})^k, g\rangle|\\
                                                                              & \le & \| (\phi^{[n+1]})^m(1-\phi^{[n+1]})^k\|_\infty  \|g\|_{H^2(\U)}\\
                                                                             & \le & \phi'(1)^{nk}\beta^k\|g\|_{H^2(\U)}.
                                                                        \end{eqnarray*}                                                                                                                                                    
Hence, 
$$
   |\lambda| \left|\frac{\lambda}{\phi'(1)^k}\right|^n \le  \frac{ \beta^k\|g\|_{H^2(\U)}}{ |\langle z^m(1-z)^k, g\rangle| }
   $$ 
for every $n$,  a contradiction since $|\lambda/\phi'(1)^k| > 1$.  
\end{proof}

 The discussion of Section~\ref{ELFS} shows that if $\phi$ is an essentially linear fractional selfmap of $\U$ of parabolic nonautomorphism type, then $\phi$ satisfies the hypotheses of the following proposition. 
  
 \begin{prop} \label{PP1} Suppose that $\phi \in C^2(\dw)$ is such that $\phi'(\dw) =1$ and $\Re(\dw\phi''(\dw)) > 0$.  If $\lambda\notin\{0, 1\}$ is an eigenvalue of $C_\phi$, then $\lambda$ is not an isolated point of $\Sp(C_\phi)$.
 \end{prop}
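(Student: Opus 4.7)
The plan is to exhibit, through the given eigenvector $f$, a continuous one-parameter curve of eigenvalues of $C_\phi$ passing through $\lambda$, from which it will follow that $\lambda$ cannot be isolated in $\Sp(C_\phi)$.

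By the Parabolic-Type Test, the hypotheses $\phi \in C^2(\dw)$, $\phi'(\dw) = 1$, and $\Re(\dw\phi''(\dw)) > 0$ force $\phi$ to be of parabolic nonautomorphism type with $\phi''(\dw) \ne 0$. Under these conditions, the Baker--Pommerenke linearization theorem supplies a holomorphic function $\sigma : \U \to \C$ satisfying $\sigma \circ \phi = \sigma + 1$ on $\U$. Because $\phi''(\dw) \ne 0$, the image $\sigma(\U)$ is contained in a halfplane; after subtracting a real constant from $\sigma$ if necessary, I may arrange $\Re \sigma(z) > 0$ for every $z \in \U$.

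With $\sigma$ in hand, for each real $t > 0$ set $h_t := e^{-t\sigma}$. Since $|h_t(z)| = e^{-t\,\Re\sigma(z)} \le 1$ on $\U$, we have $h_t \in H^\infty(\U) \subseteq H^2(\U)$, and the functional equation for $\sigma$ yields $h_t \circ \phi = e^{-t(\sigma+1)} = e^{-t} h_t$. Hence each $e^{-t}$, $t > 0$, is an eigenvalue of $C_\phi$ with eigenvector $h_t$. Given now the eigenvector $f$ for $\lambda$, the product $f h_t$ lies in $H^2(\U)$ (since $h_t$ is bounded) and is not identically zero, and a direct calculation gives
$$
(f h_t) \circ \phi = (f \circ \phi)(h_t \circ \phi) = (\lambda f)(e^{-t} h_t) = (\lambda e^{-t})(f h_t).
$$
Hence $\lambda e^{-t}$ is an eigenvalue of $C_\phi$ for every $t > 0$, and in particular $\lambda e^{-t} \in \Sp(C_\phi)$.

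The curve $t \mapsto \lambda e^{-t}$ on $(0, \infty)$ is continuous, tends to $\lambda$ as $t \to 0^{+}$, and (since $\lambda \ne 0$) never takes the value $\lambda$. Thus $\lambda$ is a limit of points in $\Sp(C_\phi) \setminus \{\lambda\}$, proving it is not isolated. The main obstacle is the existence and halfplane-mapping property of the Pommerenke linearization $\sigma$ in the parabolic nonautomorphism setting; once $\sigma$ is in place, the multiplicative construction $f \mapsto f h_t$ that upgrades a single eigenvalue $\lambda$ to a curve of eigenvalues through $\lambda$ is routine.
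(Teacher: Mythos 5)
Your overall strategy---produce a continuous curve of eigenvalues through $\lambda$ by multiplying the given eigenvector $f$ by auxiliary eigenfunctions---is sound in outline, and the final limiting step ($\lambda e^{-t}\to\lambda$ as $t\to 0^+$ with $\lambda e^{-t}\ne\lambda$) is fine. But there is a genuine gap at the crucial point: you assert that because $\phi''(\dw)\ne 0$, the Baker--Pommerenke linearization $\sigma$ (with $\sigma\circ\phi=\sigma+1$) maps $\U$ into a halfplane, so that $h_t=e^{-t\sigma}$ is bounded. This halfplane property does not follow from the hypotheses of the proposition. Under only $\phi\in C^2(\dw)$, $\phi'(\dw)=1$, and $\Re(\dw\phi''(\dw))>0$, one gets a plane-valued intertwining map, but the containment of $\sigma(\U)$ in a halfplane is exactly the delicate conclusion of the Parabolic Models Theorem (Theorem 4.12(c) of the Bourdon--Shapiro memoir), which requires substantially more: $C^{3+\epsilon}$ smoothness at $\dw$ \emph{and} a sign condition on the Schwarzian derivative, $\Re\bigl(\overline{\phi''(1)}(\mathcal{S}\phi)(1)\bigr)\ge 0$. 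Indeed, this paper itself imposes precisely those extra hypotheses in Theorem~\ref{PCWCT} for the sole purpose of obtaining a halfplane-valued $\nu$ and hence bounded eigenfunctions $e^{-t\nu\circ T}$; if your claim held under $C^2$ alone, those hypotheses would be superfluous. Without the halfplane property, $h_t$ need not lie in $H^\infty(\U)$ or even $H^2(\U)$, and the construction collapses. (A further symptom that the argument is too strong: it would show that every $e^{-t}$, $t>0$, is an eigenvalue of $C_\phi$, a conclusion the paper only obtains under the stronger hypotheses.)

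The paper's proof avoids linearization entirely and uses only the given eigenvector $f$: since the forward orbit $(\phi^{[n]}(z_0))$ is never a Blaschke sequence under these hypotheses (Lemma 4.5 of the memoir), the relation $f(\phi^{[n]}(z_0))=\lambda^n f(z_0)$ forces $f$ to be zero-free; one then writes $f=e^g$, checks that $f^{1/x}=e^{g/x}\in H^2(\U)$ for $x\ge 1$, and deduces from $g\circ\phi=\log\lambda+g+2\pi i k$ that each $\lambda^{1/x}:=e^{(\log\lambda+2\pi i k)/x}$ is an eigenvalue. The path $x\mapsto\lambda^{1/x}$ is nonconstant because $\lambda\ne 1$, so $\lambda$ is not isolated. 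If you want to salvage your multiplicative idea, replace the unproved auxiliary family $h_t$ by roots of $f$ itself in this way.
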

 \begin{proof}    Suppose that $\lambda\notin\{0 , 1\}$ is an eigenvalue of $C_\phi$ so that $f\circ \phi = \lambda f$ for some $f\in H^2$, which is not the zero function.  Suppose that $f(z_0) = 0$ for some $z_0\in \U$.  Then because $f(\phi^{[n]}(z_0)) = \lambda^nf(z_0) =0$, we see that $f$ vanishes at every point in the orbit $\{\phi^{[n]}(z_0): n\ge 0\}$.  However,   by Lemma 4.5 of \cite{BoS},  $(\phi^{[n]}(z_0))$ is not a Blaschke sequence.  Thus we have an $H^2(\U)$ function $f$, which is not the zero function, vanishing on a  non-Blaschke sequence, which is  a contradiction. It follows that $f$ has no zeros in $\U$.    Hence there is an analytic function $g$ on $\U$ such that $f = e^g$ and  $f^{1/x}:=e^{g/x}$ is an $H^2(\U)$ function for each $x\ge 1$.   Let $\log \lambda$ represent some fixed choice of the logarithm of $\lambda$.    Since $ \lambda e^g =  f\circ \phi = e^{g\circ \phi}$, it follows that there is an integer $k$ such that $g\circ\phi = \log \lambda + g + 2\pi i k$.  Then $f^{1/x} \circ \phi = e^{1/x(\log\lambda + 2k\pi i)}f^{1/x}$ so that for each $x\ge 1$,   $\lambda^{1/x}:= e^{1/x(\log\lambda + 2ki\pi )}$ is an eigenvalue of $C_\phi$.  Thus  $\lambda$ belongs to the path $\lambda^{1/x}$, $x\ge1$ of eigenvalues, so that $\lambda$ is not an isolated point of $\Sp(C_\phi)$.
 \end{proof}
 
 The proof of the following lemma relies on some ideas from \cite[Section 3]{AB}.
 
 \begin{prop}\label{PP2}  Suppose that $\phi \in C^2(\dw)$ is such that $\phi'(\dw) =1$ and $\Re(\dw\phi''(\dw)) > 0$.  Then the point spectrum of $C_\phi$ has no interior.
 \end{prop}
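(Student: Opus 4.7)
My plan is to extend the argument of PP1 by adding a quantitative growth/rigidity analysis that confines the nonzero eigenvalues of $C_\phi$ to a one-dimensional subset of $\C$. As in PP1, an eigenvector $f\in H^2$ for a nonzero eigenvalue $\lambda$ is nonvanishing, so $f=e^g$ for some analytic $g:\U\to\C$ with $g\circ\phi-g=c$ and $\lambda=e^c$. Normalize $\dw=1$, and pass to the right halfplane via the Cayley map $T(z)=(1+z)/(1-z)$: with $G=g\circ T^{-1}$ and $\Phi=T\phi T^{-1}$, we have $G\circ\Phi-G=c$, and the representation (\ref{RHPR}) yields $\Phi(w)=w+a+\Gamma(w)$ with $a=\phi''(1)$, $\Re a>0$ (by the parabolic nonautomorphism hypothesis), and $\Gamma(w)=o(1)$ as $|w|\to\infty$; whence $\Phi^{[n]}(w_0)=w_0+na+o(n)$.

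The first step is a growth upper bound. The pointwise estimate (\ref{PWB}) transfers to $\Pi$ as $\Re G(w)\le\log\|f\|+\tfrac12\log\bigl(|1+w|^2/(4\Re w)\bigr)$. Iterating the functional equation gives $\Re G(\Phi^{[n]}(w_0))=\Re G(w_0)+n\Re c$, while the pointwise bound at $w=\Phi^{[n]}(w_0)$ grows only like $\tfrac12\log n+O(1)$ thanks to the orbit asymptotics. This forces $\Re c\le 0$, i.e.\ $|\lambda|\le 1$.

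The second, more delicate step constrains $\Im c$. Let $\Sigma:\Pi\to\Pi$ be the Valiron function associated with $\Phi$, satisfying $\Sigma\circ\Phi=\Sigma+a$ and asymptotic to the identity at infinity; such $\Sigma$ exists by the standard construction for parabolic nonautomorphism maps with $C^2$ data at the fixed point. Setting $\tilde G=G-(c/a)\Sigma$, the function $\tilde G$ is $C_\Phi$-invariant, so pulling back, $f=e^{(c/a)\sigma}\cdot e^{\tilde g}$, where $\sigma=\Sigma\circ T$ is asymptotic to $T$ near $z=1$ and $\tilde g=\tilde G\circ T$ satisfies $\tilde g\circ\phi=\tilde g$. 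Since $\sigma(e^{i\theta})\sim 2i/\theta$ as $\theta\to 0^+$, the boundary modulus $|f(e^{i\theta})|^2$ behaves, up to the invariant factor $e^{2\Re\tilde g}$, like $e^{-2\Im(c/a)\cot(\theta/2)}$. Integrability near both $\theta=0$ and $\theta=2\pi$ forces $\Im(c/a)=0$, hence $c\in\R a$; combined with $\Re c\le 0$, $\lambda=e^c$ is confined to the spiral $\{e^{sa}:s\le 0\}$, a one-dimensional set with empty interior.

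The main obstacle is the final integrability deduction: one must rule out that an oscillatory $\phi$-invariant factor $e^{\tilde g}$ could rescue $H^2$-membership when $\Im(c/a)\ne 0$. Drawing on ideas from \cite[Section~3]{AB}, I would handle this via a non-Blaschke orbit argument: the orbit $\{\phi^{[n]}(z_0)\}$ is non-Blaschke (as used in the proof of PP1 via Lemma 4.5 of \cite{BoS}), and $\tilde g$ takes the constant value $\tilde g(z_0)$ along it. Combining this constraint with the quantitative growth bounds already derived from $f\in H^2$ should force $\tilde g$ to be effectively constant near $\dw$, after which the boundary analysis closes. Since the point spectrum is then contained in a single logarithmic spiral together with $\{0\}$, it has empty interior.
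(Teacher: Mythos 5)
Your strategy---confine the nonzero point spectrum to the logarithmic spiral $\{e^{sa}: s\le 0\}$, which has empty interior---is a genuinely different and much more ambitious route than the paper's, and as written it has a real gap at its crucial step. First, your Step 2 invokes a Valiron-type intertwining map $\Sigma:\Pi\to\Pi$ with $\Sigma\circ\Phi=\Sigma+a$ and $\Sigma$ asymptotic to the identity. Under the bare hypothesis $\phi\in C^2(\dw)$ of the proposition, no such model is available from the standard results: the paper itself, when it needs exactly this object (in Theorem~\ref{PCWCT}), has to assume $\phi\in C^{3+\epsilon}(1)$, univalence on $\U^-$, \emph{and} the Schwarzian condition $\Re(\overline{\phi''(1)}(\mathcal{S}\phi)(1))\ge 0$ before Theorem 4.12 of \cite{BoS} delivers a halfplane-valued intertwiner. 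Second, even granting $\Sigma$, your final integrability deduction is exactly the point you concede is open: the $\phi$-invariant factor $e^{\tilde g}$ is constrained only along a single (non-Blaschke) orbit, which controls nothing about its radial boundary behavior on arcs approaching $1$, so you cannot conclude that it fails to compensate for the factor $e^{-2\Im(c/a)\cot(\theta/2)}$. (Your Step 1, forcing $|\lambda|\le 1$, is fine but is not needed for the conclusion.)

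The paper's proof is far more elementary and avoids all of this. Using Lemma 4.5 of \cite{BoS} applied to $\phi^{[k]}$, each orbit $(\phi^{[kn]}(z_0))$ is non-Blaschke, so the reproducing kernel $K_{z_0}$ is a cyclic vector for $(C_\phi^*)^k=C_{\phi^{[k]}}^*$ for every $k\ge 1$. Cyclicity of $(C_\phi^k)^*$ forces every eigenvalue of $C_\phi^k$ to have geometric multiplicity one; hence if $\lambda$ is an eigenvalue of $C_\phi$ and $\zeta\ne 1$ is a $k$-th root of unity, $\zeta\lambda$ cannot also be an eigenvalue (otherwise two independent eigenvectors of $C_\phi^k$ would share the eigenvalue $\lambda^k$). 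Since every nonempty open subset of $\C\setminus\{0\}$ contains a pair $\lambda$, $\zeta\lambda$ with $\zeta$ a root of unity, the point spectrum has empty interior. If you want to salvage your approach, note that this multiplicity-one observation already kills the need for the spiral containment; alternatively, the spiral containment itself would require the stronger smoothness hypotheses of Theorem~\ref{PCWCT} and a genuine argument controlling the invariant factor, neither of which you have under $C^2(\dw)$.
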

 \begin{proof}  Let $k$ be an arbitrary positive integer and let $z_0\in \U$ also be arbitrary.  By Lemma 4.5 of \cite{BoS}, applied to $\phi^{[k]}$ (which also satisfies $\phi^{[k]}(\dw) = 1$ and $\Re(\dw(\phi^{[k]})''(\dw) >0$), the sequence  $(\phi^{[kn]}(z_0))$ is not Blaschke.  It follows that $(C_\phi^*)^k = C_{\phi^{[k]}}^*$ is a cyclic operator on $H^2$.  In fact, $K_{z_0}$ is a cyclic vector: if for some $f\in H^2$, we have $\langle f, (C_{\phi^{[k]}}^*)^nK_{z_0}\rangle = 0$ for all nonnegative integers $n$, then since $\langle f, (C_{\phi^{[k]}}^*)^nK_{z_0}\rangle = \langle f, K_{\phi^{[kn]}(z_0)}\rangle=f(\phi^{[kn]}(z_0))$, we see $f$ vanishes on a non-Blaschke sequence so that $f\equiv 0$.   
 
 We have shown that every positive integral power of $C_\phi^*$ is cyclic and it follows that no power of $C_\phi$ can have an eigenvalue of multiplicity greater than $1$.  In particular this means that if $\lambda$ is an eigenvalue of $C_\phi$ then $\zeta\lambda$ cannot be an eigenvalue of $C_\phi$ for $\zeta$ any root of unity different from $1$.  Thus, in particular, the point spectrum of $C_\phi$ has no interior.
 \end{proof}

\section{Spectra of Essentially Linear Fractional Composition Operators}

As we explained in Section~\ref{ELFS}, when characterizing spectra for composition operators induced by essentially linear fractional selfmaps $\phi$ of $\U$, the situation of interest is that where $\phi$ fixes a point on the unit circle.  Without loss of generality, we assume $\phi(1) = 1$.    We begin the the hyperbolic case. 

\begin{theorem}\label{HTELF} Suppose that $C_\phi$ is an essentially linear fractional selfmap of $\U$ of hyperbolic-type with Denjoy-Wolff point $1$.  Then
$$
\Sp(C_\phi) = \Sp_e(C_\phi) = \{z: |z| \le \phi'(1)^{-1/2}\}.
$$
\end{theorem}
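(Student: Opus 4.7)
The plan is to reduce $C_\phi$ to a nearby linear-fractional composition operator modulo compacts and then patch together two previously established results. The discussion in Section~\ref{ELFS} (via Theorem 7.6 of \cite{BLNS}) furnishes a linear-fractional selfmap $\psi = T^{-1}\circ\Psi\circ T$ of $\U$ with $C_\phi - C_\psi$ compact on $H^2(\U)$, where
$$
\Psi(w) = \frac{w}{p} + \left(\frac{1}{p} - 1 + \frac{a}{p^2}\right), \qquad p=\phi'(1)\in(0,1),\ a=\phi''(1).
$$
Because $\phi$ is horocyclic at $1$, the constant term has strictly positive real part, and the explicit affine form of $\Psi$ on $\Pi$ shows at once that $\psi$ is itself a selfmap of $\U$ horocyclic at $1$, with $\psi(1)=1$ and $\psi'(1)=p<1$. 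Hence $\psi$ is a linear-fractional hyperbolic non-automorphism with Denjoy-Wolff point $1$.

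I next invoke the classical spectral description of linear-fractional composition operators of hyperbolic non-automorphism type (Cowen; see, e.g., \cite[Chap.\ 7]{CMB}): $\Sp(C_\psi) = \{z: |z|\le p^{-1/2}\}$, and every interior point of this disc is an eigenvalue of $C_\psi$ of infinite multiplicity. Infinite multiplicity prevents $C_\psi - \lambda I$ from being Fredholm for such $\lambda$, so these points lie in $\Sp_e(C_\psi)$; together with the inclusion $\Sp_e(C_\psi)\subseteq \Sp(C_\psi)$ and the fact that essential spectra are closed, this forces $\Sp_e(C_\psi) = \{z:|z|\le p^{-1/2}\}$. Since the essential spectrum is preserved under compact perturbation,
$$
\Sp_e(C_\phi) = \Sp_e(C_\psi) = \{z:|z|\le \phi'(1)^{-1/2}\}.
$$

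Finally, $\phi$ itself is of hyperbolic type and horocyclic at its Denjoy-Wolff point $1$, so Proposition~\ref{HCP} applies and gives $\Sp(C_\phi) = \Sp_e(C_\phi)$. Combining this with the previous display yields the claimed identities. The only real obstacle, given all preceding results in the paper, is pinning down the right statement of Cowen's linear-fractional spectral theorem so that the \emph{full} closed disc (rather than merely its boundary circle) is extracted as $\Sp_e(C_\psi)$; this is classical, but one could alternatively apply Proposition~\ref{HCP} to $\psi$ in tandem with Cowen's computation of $\Sp(C_\psi)$ to reach the same conclusion.
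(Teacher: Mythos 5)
Your proposal is correct and follows essentially the same route as the paper: reduce to the linear-fractional map $\psi$ via the compact difference $C_\phi - C_\psi$, use Cowen's spectral description together with the infinite multiplicity of eigenvalues (or, alternatively, Proposition~\ref{HCP} applied to $\psi$) to identify $\Sp_e(C_\psi)$ with the full closed disc, transfer the essential spectrum by compact perturbation, and finish with Proposition~\ref{HCP} applied to $\phi$. The paper's proof even notes the same two alternative justifications for $\Sp_e(C_\psi) = D$ that you mention.
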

\begin{proof}   Let $\phi'(1) = p$ so that $p< 1$ and let $a = \phi''(1)$.    Let $\Psi$ be the selfmap of the right halfplane $\Pi$ given by
$$
\Psi(w) = \frac{1}{p}w + \left(\frac{1}{p} - 1 + \frac{a}{p^2}\right).
$$
By the discussion of Section~\ref{ELFS}, we know that $C_\phi$ is compactly equivalent to $C_\psi$, where $\psi = T^{-1}\circ \Psi\circ T$.  The mapping $\psi$ is a linear-fractional  hyperbolic non-automorphism of $\U$ with Denjoy-Wolff point $1$ and $\psi'(1) = p$.  By, e.g., Corollary 4.8 of \cite{Cow2} (which characterizes the spectrum of any composition operator whose symbol is of hyperbolic type and analytic on the closed disc) the spectrum of $C_\psi$ is the disc $D:=\{z: |z| \le \psi'(\dw)^{-1/2}\}=\{z: |z| \le \phi'(\dw)^{-1/2}\}$.  Actually every point in $D\setminus \{0\}$  is an infinite multiplicity eigenvalue of $C_\psi$ so that $D= \Sp_e(C_\psi)$. Alternatively, $D = \Sp_e(C_\psi)$ because $\psi$ is horocyclic at $1$ (Proposition~\ref{HCP} above).   

Because $C_\phi$ and $C_\psi$ are equivalent modulo the compacts, we have $\Sp_e(C_\phi) = D$, but since $\phi$ is horocyclic at $1$, $\Sp(C_\phi) = \Sp_e(C_\phi)$, which completes the argument.
\end{proof}

Consider $\phi(z) = 2/(\sqrt{13-4z} -1)$, the hyperbolic-type essentially linear fractional selfmap of $\U$  discussed in Example~\ref{EIHT}.   By Theorem~\ref{HTELF}, the disc $\{z: |z| \le \sqrt{3}\}$ is both the spectrum and essential spectrum of $C_\phi$. We now turn to the case in which the essentially linear fractional selfmap has interior fixed point (as well as boundary fixed point $1$).  
  
\begin{theorem}\label{IFPELF}  Suppose that $\phi$ is an essentially linear fractional selfmap of $\U$ fixing $1$.  Suppose that $\phi'(1) > 1$ so that the Denjoy-Wolff point  $\dw$ of $\phi$ lies in $\U$.  Let $N$ be least positive integer for which $\phi'(\dw)^N \le \phi'(1)^{-1/2}$.  Then
$$
\Sp(C_\phi) = \{z: |z| \le \phi'(1)^{-1/2}\} \cup  \{\phi'(\dw)^n: n = 0,\ldots, N-1\}
$$
and 
$$
\Sp_e(C_\phi) =  \{z: |z| \le \phi'(1)^{-1/2}\}.
$$                                                                      
\end{theorem}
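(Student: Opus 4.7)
The plan is to follow a similar path to Theorem~\ref{HTELF}: extract the essential spectrum from compact equivalence with a linear-fractional model, then combine the Fredholm-index discussion of Section~\ref{SESS} with the Koenigs construction to identify the finitely many isolated eigenvalues lying outside the essential-spectrum disc.

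By the construction at the end of Section~\ref{ELFS}, there is a linear-fractional selfmap $\psi$ of $\U$ with $\psi(1)=1$, $\psi'(1)=\phi'(1)$, and $\psi''(1)=\phi''(1)$, for which $C_\phi-C_\psi$ is compact. Since $\psi'(1)>1$, $\psi$ is a dilation-type linear-fractional map with interior fixed point $\omega_\psi\in\U$. Kamowitz's spectral characterization for linear-fractional composition operators gives $\Sp_e(C_\psi)=D$ and $\Sp(C_\psi)=D\cup\{\psi'(\omega_\psi)^n:n\ge 0\}$, where $D:=\{z:|z|\le\phi'(1)^{-1/2}\}$. Because the essential spectrum is invariant under compact perturbations, $\Sp_e(C_\phi)=D$, which is the second assertion.

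For the first assertion, $\C\setminus D$ is an unbounded connected set, so by the Fredholm-index discussion of Section~\ref{SESS}, every $\lambda\in\Sp(C_\phi)\setminus D$ is an eigenvalue of $C_\phi$. Now suppose $\lambda\ne 0$ is an eigenvalue with nonzero eigenvector $f\in H^2(\U)$. Evaluating $f\circ\phi=\lambda f$ at $\dw$ forces $f(\dw)=0$ whenever $\lambda\ne 1$; and if $f$ has a zero of exact order $n\ge 1$ at $\dw$, writing $f(z)=(z-\dw)^n g(z)$ with $g(\dw)\ne 0$ and comparing leading terms in $f\circ\phi=\lambda f$ yields $\lambda=\phi'(\dw)^n$. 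Hence every nonzero eigenvalue of $C_\phi$ has the form $\phi'(\dw)^n$ for some $n\ge 0$, and belonging to $\Sp(C_\phi)\setminus D$ is equivalent to $0\le n\le N-1$.

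It remains to realize each such $\phi'(\dw)^n$ as an actual eigenvalue. The case $n=0$ is given by the constant function. For $1\le n\le N-1$, Koenigs' theorem produces an analytic $\sigma:\U\to\C$ with $\sigma(\dw)=0$, $\sigma'(\dw)=1$, and $\sigma\circ\phi=\phi'(\dw)\sigma$, so $\sigma^n\circ\phi=\phi'(\dw)^n\sigma^n$. The main obstacle is verifying $\sigma^n\in H^2(\U)$ whenever $n<N$. My plan for this step is to pass to the right halfplane via $T$: the function $\tilde\sigma:=\sigma\circ T^{-1}$ satisfies $\tilde\sigma\circ\Phi=\phi'(\dw)\,\tilde\sigma$ on $\Pi$; using the asymptotic representation~(\ref{RHPR}) of $\Phi$ together with comparison to the explicit Koenigs function $w\mapsto w-w_\psi$ of the linear-fractional model $\psi$, one should obtain a boundary-growth bound $|\sigma(z)|=O(|1-z|^{-\alpha})$ as $z\to 1$ from within $\U$, where $\alpha=-\log|\phi'(\dw)|/\log\phi'(1)>0$. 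The integrability of $|1-z|^{-2n\alpha}$ on $\partial\U$ then yields $\sigma^n\in H^2(\U)$ precisely when $2n\alpha<1$, equivalently $|\phi'(\dw)|^n>\phi'(1)^{-1/2}$, i.e., $n<N$. Combining everything gives $\Sp(C_\phi)=D\cup\{\phi'(\dw)^n:0\le n\le N-1\}$.
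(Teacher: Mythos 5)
Your overall architecture (compact equivalence with the linear-fractional model $\psi$, Fredholm index on the unbounded component of the essential resolvent, identification of eigenvalues outside the disc as powers of $\phi'(\dw)$) matches the paper's. But there is a genuine gap at the central step: you assert that ``Kamowitz's spectral characterization for linear-fractional composition operators gives $\Sp_e(C_\psi)=D$.'' What Kamowitz (and Cowen--MacCluer, and Theorem 7.31 of \cite{CMB}) actually give is only the essential spectral \emph{radius} $r_e(C_\psi)=\phi'(1)^{-1/2}$ and the ``disc plus isolated eigenvalues'' form of the spectrum; whether \emph{every} point of the disc $D$ lies in the essential spectrum is precisely Question Q2 posed in the Introduction, and answering it affirmatively here is the main new content of this theorem. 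The paper fills this gap with a substantial argument: it applies the adjoint formula (\ref{ORFM}) to write $C_\psi^*$, modulo compacts, as $M_{z/p}C_\sigma B$ (using Lemma 3.3 of \cite{BLNS} to replace the weight $z\sigma'(z)$ by $z/p$), solves an intertwining equation $\gamma\circ\sigma=\nu\circ\gamma$ with $\nu$ a hyperbolic automorphism, and exhibits, for every point of the punctured open disc $\{z:0<|z|<p^{-1/2}\}$, an infinite-dimensional eigenspace of $M_{z/p}C_\sigma B$ spanned by the functions $z\mapsto z f_{b+2\pi ki/\ln p}(\gamma(z))$. Without some argument of this kind your proof establishes only $r_e(C_\phi)=\phi'(1)^{-1/2}$, not $\Sp_e(C_\phi)=D$.

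A secondary point: your final step, showing the Koenigs function satisfies $\sigma^n\in H^2(\U)$ for $n<N$, is left as a plan and is itself a delicate known theorem (the relation between $H^2$-membership of powers of the Koenigs function and the essential spectral radius), but it is also unnecessary. The paper instead cites \cite[Theorem 4.1]{Cow2} for the containment $\{\phi'(\dw)^n:n\ge 0\}\subseteq\Sp(C_\phi)$ (which follows from the triangular action of $C_\phi^*$ on the span of the point-evaluation functional at $\dw$ and its derivatives, with no eigenfunction needed), and then the Fredholm index argument you already invoke upgrades each such point outside $D$ to an eigenvalue automatically. Rerouting through that observation would let you delete the growth-estimate sketch entirely.
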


\begin{proof}   The spectrum of $C_\phi$ contains all powers of $\phi'(\dw)$ (\cite[Theorem 4.1]{Cow2}).  Moreover, work of Koenigs \cite{Kg} shows that the only possible eigenvalues of $C_\phi$ are powers of $\phi'(\dw)$.   

 Let $D =  \{z: |z| \le  \phi'(1)^{-1/2}\}$.  We prove that the disc $D$ equals $\Sp_e(C_\phi)$, and the proposition follows from the discussion of the preceding paragraph and that of Section~\ref{SESS} because any spectral point $\lambda$ outside of $D$ is in the unbounded component of the essential resolvent, making $\lambda$ an eigenvalue.

    Let $\psi$ be the linear-fractional self-mapping of $\U$ given by $T^{-1}\circ\Psi\circ T$, where 
$$
\Psi(w) = \frac{w}{p} + \left(\frac{1}{p}-1+ \frac{a}{p^2}\right),
$$
where $p = \phi'(1)>1$ and $a = \phi''(1)$.   From the discussion of Section~\ref{ELFS}, we know there is a constant $c > 0$ such that 
\begin{equation}\label{CIN}
\Re\left(\frac{1}{p}-1+\frac{a}{p^2}\right)\ge c,
\end{equation}
 and since $p>1$,  it follows that $\Re(a) > 0$ (just as in parabolic non-automorphism case). 
  We prove that the essential spectrum of $C_\psi$ is the disc $D$, which implies the same is true of $C_\phi$ since $C_\phi$ and $C_\psi$ differ by a compact operator.   

Applying either results of Kamowitz (see, e.g. p.\ 296 of \cite{CMB})  or Theorem 7.31 of \cite{CMB}, we see that the essential spectral radius of $C_\psi$ is $\psi'(1)^{-1/2} = \phi'(1)^{-1/2}$. Thus we need only show that every point in $D$ is in the essential spectrum of $C_\psi$.   

Applying the modified version (\ref{ORFM})  of Cowen's adjoint formula for linear fractional composition operators, we see
$$
C_\psi^* =  \Lambda_0 + M_{z\sigma'(z)} C_\sigma B,
$$
where $B$ is the backward shift and 
$$
 \sigma(z) = \frac{(-2p^2 +\bar{a})z - \bar{a}}{(2p-2p^2+\bar{a})z-2p-\bar{a}} \quad {\rm while}\quad z\sigma'(z) = \frac{4p^3z}{(2p-2p^2 + \bar{a})z - 2p - \bar{a})^2}.  
$$
Thus, since $\Lambda_0$ is a rank-one operator, we see that  $C_\psi^*$ is equivalent to the operator
$$
M_{z\sigma'(z)} C_\sigma B
$$
modulo the compacts.  

By Lemma 3.3 of \cite{BLNS},  The weighted composition operator   $M_{\sigma'(z)} C_\sigma$ is equivalent to $\sigma'(1)C_\sigma = \frac{1}{p}C_\sigma$ modulo the compacts, making
$C_\psi^*$ equivalent to $M_{z/p}C_\sigma B$ modulo the compacts.    We show that every point in the disc $D$ is an infinite multiplicity eigenvalue of $M_{z/p}C_\sigma B$.  Thus each point of $D$ is in the essential spectrum of $M_{z/p}C_\sigma B$, hence of $C_\psi^*$, hence of $C_\psi$, as desired.  

 The self-mapping $\sigma$ of $\U$ is of hyperbolic type with Denjoy-Wolff point $1$ and Denjoy-Wolff derivative equal to $\sigma'(1) = 1/p$.  Translated to the right-half plane $\Pi$ via $T$, the function $\sigma$ takes the form  $\Sigma(w) = pw + 1 - p + \bar{a}/p$ (i.e. $\Sigma = T\circ \sigma\circ T^{-1}$).  Note  $1-p+\bar{a}/p = p(1/p - 1 + \bar{a}/p^2)$ has positive real part  so that $\Sigma$ is indeed a self-mapping of $\Pi$.  It is easy to see that
 $$
 \Gamma(w) = w + \frac{p - p^2 + \bar{a}}{p(p-1)}
 $$
  is a self-mapping of $\Pi$ for which 
  \begin{equation}\label{RHPIR}
\Gamma\circ \Sigma =  p \Gamma.
  \end{equation}
  Note well that $(p-p^2+\bar{a})/(p(p-1))$ has strictly positive real part (the denominator is positive and $(p - p^2 + \bar{a}) = p^2(1/p - 1 +\bar{a}/p^2)$ has real part exceeding $p^2c$ by (\ref{CIN})), which means that  $\gamma(z) := (T^{-1}\circ \Gamma\circ T)(z)$ will take the unit disc $\U$ univalently onto a proper subdisc of $\U$ internally tangent to the unit disc at $1$.  In particular $\gamma(\U)$ will be bounded away from $-1$.  
   The intertwining relationship (\ref{RHPIR})  translates to the disc as follows
  $$
  \gamma \circ \sigma = \nu\circ \gamma
  $$
  where $\nu = T^{-1}\circ (p T)$ is a hyperbolic automorphism of $\U$ with $\dw = 1$ and $\nu\, '(1) = 1/p$.  Let $\ln$ be the principal branch of the logarithm function and recall that $f_b(z) = e^{b\ln((1+z)/(1-z))}$ belongs to $H^2(\U)$ whenever $-1/2 < \Re(b) < 1/2$.  Since $\gamma(\U)$ is bounded away from $-1$, we conclude that $f_b\circ \gamma$ belongs to $H^2(\U)$ whenever $-\infty < \Re(b) < 1/2$.   
Let $g(z) = zf_b(\gamma(z))$ so that $Bg = f_b\circ \gamma$. For every $b$ satisfying $-\infty < \Re(b) < 1/2$, we have
\begin{eqnarray*}
M_{z/p}(C_\sigma B g)(z) &=& M_{z/p}(f_b\circ\gamma \circ \sigma)(z)\\
  & = & \frac{z}{p} (f_b\circ \nu)(\gamma(z))\\
  &=& \frac{z}{p}(T\circ\nu)^b(\gamma(z))\\
  & = & \frac{z}{p} p^{b}f_b(\gamma(z))\\
  & = & p^{b -1} g(z).
  \end{eqnarray*}
  Fix $b$ with $-\infty < \Re(b) < 1/2$ and let $k$ be an integer.   Replacing $b$ with $b + 2\pi ki/\ln(p)$ in the preceding computation, we see that $$z\mapsto ze^{(b+2\pi ki/\ln(p))\ln((1+\gamma(z))/(1-\gamma(z)))}$$ is an eigenvector for $M_{z/p}C_\sigma B$ with corresponding eigenvalue $p^{b-1}$.  It follows that every point in the punctured disc $\{z: |z| < p^{-1/2}\}\setminus\{0\}$ is an infinite-multiplicity eigenvalue of the operator $M_{z/p} C_\sigma B$.  Hence the essential spectrum of  $M_{z/p} C_\sigma B$ contains the closure of this punctured disc, which is  $D$, as desired.
  \end{proof}

By the the preceding theorem, $\Sp(C_\phi) = \{z: |z| \le 1/\sqrt{2}\} \cup \{1\}$ and $\Sp_e(C_\phi) = \{z: |z| \le 1/\sqrt{2}\}$ when $\phi(z) = \frac{2z^2-3z+3}{2z^2-7z+7}$ is the essentially linear fractional selfmap of Example~\ref{ELFSQE} ($\phi(1/2) = 1/2$, $\phi'(1/2) = 3/8$, and $\phi'(1) = 2$).  

      As we have discussed, if $\phi$ is essentially linear fractional and of parabolic type with Denjoy-Wolff point $1$, then $\Re(\phi''(1)) > 0$.   We now complete our characterization of spectra and essential spectra for essentially linear fractional composition operators.
    
    \begin{theorem}\label{PTELF}  Suppose that $\phi$ is an essentially linear fractional selfmap of $\U$ fixing $1$.  Suppose that $\phi'(1) = 1$ so that the Denjoy-Wolff point  $\dw$ of $\phi$ is $1$.  Let $a =\phi''(1)$ so that $\Re(a) >0$.    Then
$$
\Sp(C_\phi)  = \Sp_e(C_\phi) =  \{e^{-at}:  t\ge 0\}\cup \{0\}.  
$$                                                                     
\end{theorem}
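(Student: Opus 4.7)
The plan is to reduce to the linear-fractional case via compact equivalence, apply Cowen's spectral calculation, and then combine analytic Fredholm theory with Proposition~\ref{PP1} to rule out extra spectral points of $C_\phi$.

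First I would set $\Psi(w) = w + a$ (the $p = 1$ specialization of the representation~(\ref{RHPR})) and let $\psi = T^{-1}\circ\Psi\circ T$. Then $\psi$ is a linear-fractional parabolic non-automorphism fixing $1$ with $\psi''(1) = a$, and by the discussion of Section~\ref{ELFS}, $C_\phi - C_\psi$ is compact. Consequently $\Sp_e(C_\phi) = \Sp_e(C_\psi)$. Moreover, $\psi$ is precisely Cowen's map $\phi_{a, \pi/2}$ from Section~\ref{GRCOS}, so by his Corollary 6.2, $\Sp(C_\psi) = \{e^{-ta} : t \ge 0\}\cup \{0\}$. Since $\Re(a) > 0$, this set is a Jordan arc from $1$ to $0$ (a segment if $a$ is real, a logarithmic spiral otherwise), and its complement in $\C$ is open, connected, and unbounded.

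Next I would upgrade this to $\Sp_e(C_\psi) = \Sp(C_\psi)$ by analytic Fredholm theory. If some $\lambda_0$ on the spiral were outside $\Sp_e(C_\psi)$, then $\lambda_0$ would sit in the complement of $\Sp_e(C_\psi)$, which contains the connected complement of the spiral and hence points where $C_\psi - \lambda I$ is invertible; thus the Fredholm index would be zero throughout this component, and the analytic Fredholm theorem would force $\lambda_0$ to be an isolated point of $\Sp(C_\psi)$. This contradicts the elementary observation that every point of the spiral is a two-sided (or one-sided, at the endpoints $0$ and $1$) limit of other spiral points. Transporting the conclusion via the compact perturbation, $\Sp_e(C_\phi) = \{e^{-ta} : t \ge 0\}\cup\{0\}$.

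Finally I would show $\Sp(C_\phi) \subseteq \Sp_e(C_\phi)$. Let $\lambda \in \Sp(C_\phi)\setminus\Sp_e(C_\phi)$. Because the complement of the spiral is connected and unbounded, $\lambda$ lies in the unique Fredholm component, on which the index is zero; by the discussion of Section~\ref{SESS}, $\lambda$ is then an eigenvalue of $C_\phi$, and by analytic Fredholm theory $\lambda$ is isolated in $\Sp(C_\phi)$. Because $\phi \in C^2(1)$, $\phi'(1) = 1$, and $\Re(\phi''(1)) = \Re(a) > 0$, Proposition~\ref{PP1} applies and forces any such isolated eigenvalue into $\{0, 1\}$; but $0$ and $1$ both lie on the spiral, i.e.\ in $\Sp_e(C_\phi)$, a contradiction. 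Hence $\Sp(C_\phi) = \Sp_e(C_\phi) = \{e^{-ta}: t \ge 0\}\cup \{0\}$, as desired. The main obstacles are the two Fredholm-theoretic geometric facts (connectedness of $\C$ minus the spiral and the non-isolation of spiral points), together with the correct application of the analytic Fredholm theorem on the unbounded Fredholm component; each of these is standard, but must be cleanly coordinated with Proposition~\ref{PP1}.
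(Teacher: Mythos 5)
Your proof is correct and follows essentially the same route as the paper's: compact equivalence with the linear-fractional map $\psi = T^{-1}\circ(T+a)$, Cowen's computation of $\Sp(C_\psi)$, the topological observation that the spiral has empty interior and no isolated points, and Fredholm theory together with Proposition~\ref{PP1} to exclude spectral points off the spiral. The only (harmless) difference is in the endgame: the paper combines Proposition~\ref{PP1} with Proposition~\ref{PP2} (the point spectrum has empty interior) to contradict the boundary-point criterion from Conway, whereas you obtain the isolation of $\lambda$ directly from the discreteness clause of the analytic Fredholm theorem on the unbounded (and only) Fredholm component and then apply Proposition~\ref{PP1} alone.
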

\begin{proof}  Let $\psi$ be the linear fractional selfmap of $\U$ given by $\psi = T^{-1}\circ\Psi\circ T$, where $\Psi$ is the selfmap of $\Pi$ given by
$$
\Psi(w) = w + a.
$$
By Theorem 6.1 of \cite{Cow2}, the spectrum of $C_\psi$ is is precisely the spiral $S :=  \{e^{-at}:  t\ge 0\}\cup \{0\}$.   Because $S$ has no interior and no isolated points, $S$ is the essential spectrum of $C_\psi$ as well.  Because $C_\phi$ and $C_\psi$ are equivalent modulo the compact operators, $\Sp_e(C_\phi) = S$, as desired.  

That $\Sp(C_\phi) = \Sp_e(C_\phi)$ follows quickly from Propositions~\ref{PP1} and \ref{PP2}.   Let $\lambda\in\Sp(C_\phi)\setminus \Sp_e(C_\phi)$ be arbitrary.  Note $\lambda$ is neither $0$ nor $1$.   Because the complement of $S$ is connected, $C_\phi - \lambda I$ must be Fredholm of index $0$.   Thus $\lambda$ must belong to the point spectrum of $C_\phi^*$ and from Proposition~\ref{PP1}, we see that $\lambda$ is not an isolated point of the spectrum of $C_\phi$.  Since $\lambda$ is in the essential resolvent and not isolated, it cannot be a boundary point of $\Sp(C_\phi)$ (see, e.g, \cite[Theorem 6.8, p.\ 366]{Con}).  This contradicts Proposition~\ref{PP2}: since $\Sp(C_\phi)\setminus \Sp_e(C_\phi)$ consists of eigenvalues,  every point in $\Sp(C_\phi)\setminus \Sp_e(C_\phi)$  must be a boundary point of $\Sp(C_\phi)$.    We conclude $\Sp(C_\phi)\setminus \Sp_e(C_\phi)$ is empty, as desired.
\end{proof}

Applying the preceding theorem, we see that the spiral $\{e^{-2t - it}: t\ge 0\} \cup\{0\}$ is the spectrum and essential spectrum of the parabolic-type selfmap $\phi$ of Example~\ref{EIPC}.


\section{Spectra of Some Weighted Composition Operators}
In this section, we consider the spectrum of weighted composition operators on $H^2(\U)$ of the form  $C_{g, \phi}$, where $\phi$ is an analytic selfmap of $\U$ and $g\in H^\infty(\U)$.  Here, we have $C_{g, \phi} f = g f\circ \phi$  for $f\in H^2(\U)$.  Clearly,  $C_{g, \phi}$ is a bounded linear operator on $H^2(\U)$ and $\|C_{g,\phi}:H^2(\U)\rightarrow H^2(\U)\| \le \|g\|_\infty\|C_\phi: H^2(\U)\rightarrow H^2(\U)\|$.  

We begin with a couple results that do not require $\phi$ to be essentially linear fractional.

\begin{lemma}\label{ELWC}  Suppose that $\phi$ is an {\em arbitrary}  analytic selfmap of $\U$ with Denjoy-Wolff point $\dw$.   Suppose that $g\in H^\infty(\U)$  extends to be continuous on $\U\cup \{\dw\}$ (if $\dw \in \partial \U$).  If $\lambda$ is an eigenvalue of $C_{g, \phi}$, then  $|\lambda| \le |g(\dw)|\, r(C_\phi)$, where $r(C_\phi)$ is the spectral radius of $\phi$.  If $g(\dw) = 0$ and $\phi$ and $g$ are nonconstant, then $C_{g,\phi}$ has no eigenvalues.  
\end{lemma}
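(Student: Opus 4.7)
The plan is to iterate the eigenvalue equation. If $C_{g,\phi} f = \lambda f$ with $f \not\equiv 0$, then a straightforward induction gives
$$
\lambda^n f(z) = \left(\prod_{k=0}^{n-1} g(\phi^{[k]}(z))\right) f(\phi^{[n]}(z))
$$
for every nonnegative integer $n$ and every $z \in \U$. I would fix a point $z_0 \in \U$ with $f(z_0) \ne 0$ and exploit the Denjoy-Wolff convergence $\phi^{[n]}(z_0) \to \dw$, together with the hypothesis that $g$ is continuous at $\dw$, to extract the desired bound from the pointwise identity above.

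To prove $|\lambda| \le |g(\dw)|\,r(C_\phi)$, I would dispatch $\lambda = 0$ trivially and assume $\lambda \ne 0$. The product on the right then cannot vanish, so $g(\phi^{[k]}(z_0)) \ne 0$ for every $k$; taking absolute values and $n$-th roots yields
$$
|\lambda|\,|f(z_0)|^{1/n} = \left(\prod_{k=0}^{n-1} |g(\phi^{[k]}(z_0))|\right)^{1/n} |f(\phi^{[n]}(z_0))|^{1/n}.
$$
By a Ces\`aro argument applied to the sequence $\log|g(\phi^{[k]}(z_0))|$ (which converges to $\log|g(\dw)|$, or to $-\infty$ when $g(\dw) = 0$), the geometric mean tends to $|g(\dw)|$. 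For the remaining factor I would combine the reproducing-kernel estimate (\ref{PWB}) with the standard lower bound $\|C_\phi^n\|^2 \ge (1 - |z_0|^2)/(1 - |\phi^{[n]}(z_0)|^2)$ obtained from $(C_\phi^n)^* K_{z_0} = K_{\phi^{[n]}(z_0)}$, concluding that $\limsup_n |f(\phi^{[n]}(z_0))|^{1/n} \le r(C_\phi)$. Letting $n \to \infty$ in the displayed identity then produces the inequality.

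For the second claim, the bound just proved immediately rules out any nonzero eigenvalue when $g(\dw) = 0$. It remains only to rule out $\lambda = 0$; but $C_{g,\phi} f \equiv 0$ means $g(z) f(\phi(z)) = 0$ throughout $\U$, so $f \circ \phi$ vanishes off the discrete zero set of the nonconstant analytic function $g$, and the identity theorem forces $f \circ \phi \equiv 0$. Since $\phi$ is nonconstant, $\phi(\U)$ is open, and a second application of the identity theorem gives $f \equiv 0$, contradicting our choice of eigenvector.

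The main obstacle is controlling the factor $|f(\phi^{[n]}(z_0))|^{1/n}$: when $\dw \in \partial \U$, the naive pointwise bound from (\ref{PWB}) blows up because $1 - |\phi^{[n]}(z_0)|^2 \to 0$, and one must convert this blow-up into the precise geometric rate dictated by $r(C_\phi)$ by invoking the lower bound on $\|C_\phi^n\|$ coming from the adjoint action on reproducing kernels.
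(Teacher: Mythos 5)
Your proposal is correct and follows essentially the same route as the paper: iterate the eigenvalue equation, take $n$-th roots, let the geometric mean of the weight values converge to $|g(\dw)|$, and control the remaining factor via $\|C_\phi^n\|^{1/n}\to r(C_\phi)$, finishing the $g(\dw)=0$ case with the identity theorem. The only cosmetic difference is that the paper bounds $|f(\phi^{[n]}(z))| = |(C_\phi^n f)(z)| \le \|C_\phi^n\|\,\|f\|/\sqrt{1-|z|^2}$ directly by pairing $C_\phi^n f$ with $K_z$, whereas you arrive at the identical bound by pairing $f$ with $K_{\phi^{[n]}(z_0)}$ and then invoking $\|C_\phi^n\| \ge \|K_{\phi^{[n]}(z_0)}\|/\|K_{z_0}\|$.
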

\begin{proof}    Suppose that $\lambda$ is an eigenvalue for $C_{g, \phi}$   with corresponding eigenvector $f$.   For each positive integer $n$ and $z\in \U$, we have
\begin{equation}\label{HCEA}
\lambda^n f(z)= \prod_{j=0}^{n-1} g(\phi^{[j]}(z)) f(\phi^{[n]}(z)).
\end{equation}
Observe that for any fixed $z\in \U$ and positive integer $n$, we have from (\ref{PWB}):
\begin{equation}\label{SOEA}
|f(\phi^{[n]}(z))| \le \frac{\|f\circ\phi^{[n]}\|_{H^2(\U)}}{\sqrt{1-|z|^2}}\le \|C_{\phi}^n\| \frac{ \|f\|_{H^2(\U)}}{\sqrt{1-|z|^2}}.
\end{equation}

Choose $z\in \U$ such that $f(z)\ne 0$.  Since $(\phi^{[j]}(z))$ approaches $\dw$ as $j\rightarrow \infty$, we know $g(\phi^{[j]}(z))$ approaches $g(\dw) = 0$ as $j\rightarrow\infty$.  Upon (i) taking $n$-th roots of the absolute value each side of (\ref{HCEA}), (ii) using the estimate (\ref{SOEA}),  and (iii) letting $n\rightarrow \infty$, we obtain
\begin{equation}\label{ERFS}
 |\lambda|\le |g(\dw)|\, r(C_\phi),
 \end{equation}
as desired.   

Now assume $g(\dw)  = 0$ while $g$ and $\phi$ are nonconstant.  Then by (\ref{ERFS}), $\lambda = 0$ is the only possible eigenvalue for $C_{g,\phi}$.  However,   $g$, being nonconstant,  is not the zero function; since $\phi$ is also nonconstant,  $0$ cannot be an eigenvalue of $C_{g,\phi}$.  
\end{proof}

We can weaken the hypothesis on $g$ by making further assumptions on $\phi$.

\begin{lemma}\label{ELWC2} Suppose that  (i) $\phi$ is an arbitrary selfmapping of $\U$ of hyperbolic type or  (ii) $\phi \in C^2(1)$ satisfies  $\phi(1) = 1$, $\phi'(1) = 1$, and $\Re(\phi''(1)) > 0$.  If $\lambda$ is an eigenvalue of $C_{g,\phi}$ and  $g\in H^\infty(\U)$ has finite radial (equivalently nontangential) limit $g(\dw)$  at $\dw$, then
$$
|\lambda| \le |g(\dw))|\, r(C_\phi);
$$
moreover, $C_{g,\phi}$ will have no eigenvalues if $g(\dw) = 0$, and $\phi$ and $g$ are nonconstant functions.  

\end{lemma}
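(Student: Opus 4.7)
The plan is to adapt the proof of Lemma~\ref{ELWC}. The only step there that genuinely required continuity of $g$ at $\dw$ was the passage $g(\phi^{[j]}(z)) \to g(\dw)$. Since $g$ now has only a nontangential limit at $\dw$, the crux of the argument is to verify that, under either (i) or (ii), the iterate sequence $(\phi^{[n]}(z))$ approaches $\dw$ \emph{nontangentially} for every $z \in \U$. Granted this, the same chain of inequalities from the proof of Lemma~\ref{ELWC}---namely taking $n$-th roots in the identity $\lambda^n f(z) = \prod_{j=0}^{n-1} g(\phi^{[j]}(z))\, f(\phi^{[n]}(z))$ and invoking (\ref{PWB}) applied to $C_\phi^n f$---yields $|\lambda| \le |g(\dw)|\, r(C_\phi)$; convergence of the geometric mean $\left(\prod_{j=0}^{n-1}|g(\phi^{[j]}(z))|\right)^{1/n}$ to $|g(\dw)|$ holds by Ces\`aro summation when $g(\dw)\neq 0$, and by a direct $\varepsilon$-splitting (using $|g|\le \|g\|_\infty$) when $g(\dw)=0$.

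For hypothesis (ii), I would take $\dw=1$ and conjugate to the right half-plane. By (\ref{RHPR}) with $p=1$, $\Phi:=T\circ\phi\circ T^{-1}$ has the form $\Phi(w)=w+a+\Gamma(w)$ with $\Re a>0$ and $\Gamma(w)=o(1)$ as $|w|\to\infty$. Because the Denjoy-Wolff theorem forces $\Phi^{[k]}(w)\to\infty$ in $\Pi$, iterating and applying Ces\`aro summation to $\Gamma(\Phi^{[k]}(w))\to 0$ gives $\Phi^{[n]}(w)=w+na+o(n)$; hence $\arg \Phi^{[n]}(w)\to \arg a\in (-\pi/2,\pi/2)$, which translates under $T^{-1}$ to nontangential approach of $\phi^{[n]}(z)$ to $1$. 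For hypothesis (i), nontangential approach of iterates to the Denjoy-Wolff point is a classical consequence of Julia's Lemma combined with the Schwarz--Pick inequality: Julia gives $\phi^{[n]}(z)\in H(\dw,\phi'(\dw)^n\beta_0)$ for a constant $\beta_0=\beta_0(z)$, while Schwarz--Pick invariance of the hyperbolic metric (comparing $\phi^{[n]}(z)$ to $\phi^{[n]}(0)$) bounds $1-|\phi^{[n]}(z)|$ from below by a positive multiple of $\phi'(\dw)^n$, and the two estimates together control $|1-\phi^{[n]}(z)|/(1-|\phi^{[n]}(z)|)$.

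The `moreover' assertion is then immediate: if $g(\dw)=0$ while $g$ and $\phi$ are nonconstant, the displayed inequality forces any eigenvalue of $C_{g,\phi}$ to be $0$; but $C_{g,\phi}=M_g C_\phi$ factors as a product of injective operators on $H^2(\U)$ (nonzero $g\in H^\infty(\U)$ has measure-zero zero set on $\partial \U$, so $M_g$ is injective; nonconstant $\phi$ has open image in $\U$, so $C_\phi f=0$ forces $f\equiv 0$), hence $C_{g,\phi}$ has no eigenvalues at all. I expect the main technical obstacle to be a clean justification of the lower bound $1-|\phi^{[n]}(z)|\gtrsim \phi'(\dw)^n$ in case (i), since horocyclicity (as in Proposition~\ref{HCP}) is not assumed; this estimate is standard in the iteration theory of hyperbolic-type selfmaps, but presenting it cleanly may require either an appeal to the Koenigs/Valiron intertwining function or a direct citation to the composition-operator literature.
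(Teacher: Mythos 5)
Your proposal is correct and follows essentially the same route as the paper: the paper's entire proof consists of observing that in case (i) the orbit converges nontangentially to $\dw$ by \cite[Lemma 2.66]{CMB} and in case (ii) by \cite[Lemma 4.5]{BoS}, and then rerunning the argument of Lemma~\ref{ELWC}. The only difference is that you sketch proofs of these two nontangential-convergence facts (and correctly flag the lower bound on $1-|\phi^{[n]}(z)|$ in the hyperbolic case as the delicate point, to be settled by citation), whereas the paper simply cites them.
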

\begin{proof}  In cases (i) and (ii) for each $z\in \U$, the sequence $(\phi^{[j]}(z))$ converges nontangentially to $\dw$ (for case (i) see \cite[Lemma 2.66]{CMB}; for case (ii), see Lemma 4.5 of \cite{BoS}).    Thus  $(g(\phi^{[j]}(z))$ will converge to $g(\dw)$ as $j\rightarrow \infty$ and the argument of Lemma~\ref{ELWC} may be applied to complete the proof.
\end{proof}

We now turn our attention to weighted composition operators $C_{g, \phi}$, where $\phi$ is essentially linear fractional.  
 Suppose that $\phi$ is essentially linear fractional, so that there are unimodular constants $\eta$ and $\zeta$ such that  $\phi\in C^3(\zeta)$, $\phi(\zeta) = \eta$, $\phi^{-1}(\{\eta\}) = \{\zeta\}$,  and $\phi(\U)$ lies in a proper subdisc of $\U$ internally tangent to the unit circle at $\eta$.  Just as in Section~\ref{ELFS}, we see that if $\eta\ne\zeta$, then $\|\phi\circ \phi\|_\infty < 1$ and $(C_{\phi})^2$ is thus compact.  It follows that $(C_{g, \phi})^2 = g\, g\circ\phi\,  (C_\phi)^2$ is also compact and its spectrum is $\{g(\dw)\phi'(\dw)^n: n = 0, 1, 2, \ldots\}\cup \{0\}$ by, e.g, \cite[Corollary 1]{GG}  or \cite[p.\ 187]{CFD}.   Thus, just as in the preceding section, we focus on the situation where $\phi$ fixes a point on $\partial \U$ and we assume, without loss of generality, that this point is $1$.

 We require the following lemma, which is a generalization of Lemma 3.3 of \cite{BLNS}.  The structure of our proof is the same as the one in \cite{BLNS}.
 
 \begin{lemma}\label{L33}  Suppose that $\phi$ is an essentially linear fractional selfmapping of $\U$  such that $\phi(1) = 1$, so that $\phi$ is horocyclic at $1$ and $\phi^{-1}(\{1\}) = \{1\}$.  In addition, suppose that for some $\delta > 0$, $\phi'$  is continuous on $ \{z: |z-1| < \delta \ \text{and}\ |z| \le 1\}$.    Suppose that $g\in H^\infty(\U)$ has radial limit function $g$ that is defined on some open arc $A$ of the unit circle including $1$, that $g(1) = 0$,  and that $g$ restricted to  $A$ is differentiable at $1$.   Then $C_{g,\phi}$ is a compact operator on $H^2(\U)$.
 \end{lemma}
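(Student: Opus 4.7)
I would verify compactness via the standard criterion: if $(f_n)$ is bounded in $H^2(\U)$ and $f_n \to 0$ uniformly on compact subsets of $\U$, then $\|C_{g,\phi} f_n\|_{H^2(\U)} \to 0$. Writing the norm as a boundary integral,
$$\|C_{g,\phi} f_n\|_{H^2(\U)}^2 = \frac{1}{2\pi}\int_0^{2\pi} |g(e^{it})|^2 |f_n(\phi(e^{it}))|^2\, dt,$$
the goal reduces to applying Lebesgue dominated convergence: establish pointwise a.e.\ convergence of the integrand to zero together with a uniform-in-$n$ constant majorant.

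Pointwise convergence: because $\phi(\U)\subseteq H(1,\beta)$ and $\overline{H(1,\beta)}\cap\partial\U=\{1\}$, all radial limits $\phi(e^{it})$ lie in $\overline{H(1,\beta)}$, hence in $\U$ unless $\phi(e^{it})=1$. The cluster-set condition $\phi^{-1}(\{1\})=\{1\}$ rules out the latter off of $t=0$, since a radial limit value of $1$ at some $e^{it_0}\ne 1$ would place $1$ in the cluster set of $\phi$ at $e^{it_0}$. So $\phi(e^{it})\in\U$ for a.e.\ $t$, giving $f_n(\phi(e^{it}))\to 0$ pointwise a.e.\ by hypothesis, and $g$ being bounded makes the integrand vanish a.e.

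Uniform majorant: I would combine the reproducing-kernel estimate $|f_n(w)|^2\le\|f_n\|_{H^2(\U)}^2/(1-|w|^2)$ with the horodisc inequality $1-|\phi(e^{it})|^2\ge|1-\phi(e^{it})|^2/\beta$, so the task becomes bounding $|g(e^{it})|^2/|1-\phi(e^{it})|^2$ uniformly in $t$. Near $t=0$, the differentiability of $g|_A$ at $1$ with $g(1)=0$ yields $|g(e^{it})|\le C|e^{it}-1|$, while continuity of $\phi'$ on $\{|z-1|<\delta,\,|z|\le 1\}$ together with $\phi(1)=1$ and $\phi'(1)>0$ (Julia--Carath\'eodory) gives $|1-\phi(e^{it})|\ge c|e^{it}-1|$; the factors of $|e^{it}-1|$ cancel and the quotient is bounded on a neighborhood of $t=0$. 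Away from $t=0$, the condition $\phi^{-1}(\{1\})=\{1\}$ supplies, at each $\zeta\in\partial\U\setminus\{1\}$, a neighborhood in $\U^-$ on which $\phi$ stays a positive distance from $1$; a finite covering of the compact set $\{e^{it}:|e^{it}-1|\ge\delta'\}$ then yields a uniform lower bound $|1-\phi(e^{it})|\ge c_{\delta'}$ a.e.\ there (passing from interior to boundary via radial limits), and $|g(e^{it})|\le\|g\|_\infty$ handles the numerator. Combining the two regional bounds delivers a uniform constant majorant $|g(e^{it}) f_n(\phi(e^{it}))|^2\le M\sup_n\|f_n\|_{H^2(\U)}^2$.

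The principal obstacle is the off-origin estimate: promoting the qualitative cluster-set condition $\phi^{-1}(\{1\})=\{1\}$ to a quantitative uniform separation $|1-\phi(e^{it})|\ge c_{\delta'}$ for $|e^{it}-1|\ge\delta'$ requires a careful compactness/covering argument on the unit circle and careful passage between interior and boundary values of $\phi$. The near-origin estimate, by contrast, is a routine consequence of the Lipschitz behavior of $g$ at $1$ and the $C^1$ boundary behavior of $\phi$ there.
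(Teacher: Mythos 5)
Your argument is correct, and its analytic core coincides with the paper's: both proofs rest on exactly the same pair of boundary estimates, namely $|g(e^{it})|\le C_1|t|$ (from $g(1)=0$ and differentiability of $g|_A$ at $1$) and $|1-\phi(e^{it})|\ge C_2|t|$ (obtained, as you do, by splitting into a small arc near $1$ where the continuity of $\phi'$ and $\phi'(1)>0$ give the linear lower bound, and the complementary arc where the cluster-set condition $\phi^{-1}(\{1\})=\{1\}$ forces a uniform separation $|1-\phi(e^{it})|\ge c_2$), combined with the horodisc inequality $1-|\phi(e^{it})|^2\ge |1-\phi(e^{it})|^2/\beta$. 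Where you diverge is the functional-analytic wrapper. The paper computes $\sum_{n\ge 0}\|C_{g,\phi}z^n\|^2=\frac{1}{2\pi}\int_{-\pi}^{\pi}\frac{|g(e^{it})|^2}{1-|\phi(e^{it})|^2}\,dt$ and shows this is finite, so $C_{g,\phi}$ is Hilbert--Schmidt; compactness follows for free, and the conclusion is strictly stronger than what you obtain. Your route through the weak-convergence criterion plus dominated convergence proves only compactness, and it carries two small burdens the paper's computation avoids: you must justify that the criterion (bounded $f_n\to 0$ uniformly on compacta implies $\|C_{g,\phi}f_n\|\to 0$) characterizes compactness for weighted composition operators --- this is fine by reflexivity of $H^2(\U)$, since such sequences are exactly the weakly null ones --- and you must identify the a.e.\ boundary values of $f_n\circ\phi$ with $f_n(\phi(e^{it}))$, which is legitimate precisely because $\phi(e^{it})\in\U$ a.e.\ (the point you correctly isolate from the horocyclic condition). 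Your off-origin ``obstacle'' is handled in the paper by the same brief contradiction argument you sketch: a sequence $e^{it_k}$ bounded away from $1$ with $\phi(e^{it_k})\to 1$ would, after replacing radial limits by nearby interior values, put $1$ in the cluster set of $\phi$ at some $\zeta\ne 1$. In short: same estimates, weaker conclusion, slightly more overhead; the paper's Hilbert--Schmidt computation is the more economical package.
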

 \begin{proof}    Our hypotheses on $g$ show that there is a constant $C_1$ such that $|g(e^{it})| < C_1|t|$ for almost every $t\in [-\pi, \pi)$ (with respect to Lebesgue measure).  Similarly, our hypotheses on $\phi$ show that there is a constant $C_2>0$ such that  $|1- \phi(e^{it})| \ge C_2|t|$ for almost every $t\in [-\pi, \pi)$.   To obtain the latter estimate, note that $\phi'(1)$ must be positive since $\phi$ is a selfmap of $\U$. Thus our differentiability hypothesis for $\phi$ on $\U^-\cap \{z: |z-1| < \delta\}$  shows that there is an open interval $E$ containing $0$ and a constant $c_1$ such that $|1-\phi(e^{it})| \ge c_1|t|$ for each $t \in E$.  In addition, there must be a constant $c_2 > 0$ such that  $|1- \phi(e^{it})| \ge c_2$ for almost every $t\in [-\pi, \pi)\setminus E$ for otherwise $1$ would be a cluster point of some $\zeta\in \partial \U \setminus \{e^{it}: t\in E\}$, contradicting $\phi^{-1}(\{1\}) = \{1\}$.  Thus there must be a constant  $C_2>0$ such that  $|1- \phi(e^{it})| \ge C_2|t|$ for almost every $t\in [-\pi, \pi)$.
 
  We show that $C_{g, \phi}: H^2(\U)\rightarrow H^2(\U)$ is actually a Hilbert-Schmidt operator.  Because $(z^n: n =0, 1, \ldots)$ is an orthonormal basis of $H^2(\U)$, it suffices to show that
 $$
 \sum_{n=0}^\infty \|C_{g, \phi} z^n\|^2 < \infty.
 $$
 We have for each nonnegative integer $n$.
 \begin{eqnarray*}
\|C_{g, \phi} z^n\|^2 =  \frac{1}{2\pi} \int_{-\pi}^\pi  |g(e^{it})|^2 |\phi(e^{it})|^{2n}\, dt \\
                                    \le\frac{C_1^2}{2\pi}  \int_{-\pi}^\pi  |t|^2 |\phi(e^{it})|^{2n}\, dt.
                                   \end{eqnarray*}
Summing both sides of the inequality just established, interchanging the sum and the integral, and applying the the fact that $|\phi(e^{it})| < 1$ a.e. (since $\phi$ is horocyclic at $1$), we obtain
\begin{equation}\label{LE}
\sum_{n=0}^\infty \|C_{g, \phi} z^n\|^2 \le \frac{C_1^2}{2\pi}  \int_{-\pi}^\pi  \frac{|t|^2}{1- |\phi(e^{it})|^2}\, dt.
\end{equation}
Because $\phi(\U)$ lies in a horodisc of the form $H(1, \beta)  = \{z: |1- z|^2 < \beta(1- |z|^2)\}$ for some $\beta >0$, it follows that $|1-\phi(e^{it})|^2 \le \beta(1-|\phi(e^{it})|^2)$ for almost every $t\in [-\pi, \pi)$.   Combining this estimate with the one discussed in the initial paragraph of the proof  ($|1- \phi(e^{it})| \ge C_2|t|$), we see that for almost every $t\in [-\pi, \pi)$,
$$
1-|\phi(e^{it})|^2 \ge \frac{C_2^2}{\beta}|t|^2.
$$
Thus the integral on the right of equation (\ref{LE}) is finite and $C_{g, \phi}$ is Hilbert-Schmidt, as claimed. 
\end{proof}

  The following theorem generalizes Theorem 3 of \cite{GG}.  
  \begin{theorem}\label{GGG} Suppose that $\phi$ and $g$ are as in Lemma~\ref{L33}.  Then $\Sp(C_{g,\phi}) = \{0\}$.
  \end{theorem}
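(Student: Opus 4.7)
The plan is to combine Lemma~\ref{L33} with Lemma~\ref{ELWC2}. First, Lemma~\ref{L33} yields that $C_{g,\phi}$ is Hilbert--Schmidt, hence compact on $H^2(\U)$. Standard Riesz--Schauder theory then tells us that $\Sp(C_{g,\phi})$ consists of $0$ together with at most countably many nonzero eigenvalues of finite multiplicity whose only possible accumulation point is $0$; in particular, to establish $\Sp(C_{g,\phi})=\{0\}$ it suffices to show that $C_{g,\phi}$ has no nonzero eigenvalues.

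Second, I would verify that the hypotheses of Lemma~\ref{ELWC2} are satisfied with Denjoy--Wolff point $\dw = 1$, the case of interest here. Since $\phi$ is essentially linear fractional and $\phi(1) = 1$, we have $\phi \in C^3(1) \subseteq C^2(1)$ and $\phi'(1) > 0$. If $\phi'(1) < 1$, then $\phi$ is of hyperbolic type and case (i) of Lemma~\ref{ELWC2} applies. If $\phi'(1) = 1$, then the discussion near the end of Section~\ref{ELFS} together with the Parabolic-Type Test forces $\phi$ to be of parabolic nonautomorphism type with $\Re(\phi''(1)) > 0$, so case (ii) of Lemma~\ref{ELWC2} applies. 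In either subcase, $1$ is the Denjoy--Wolff point of $\phi$ and Lemma~\ref{ELWC2} is available.

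Finally, apply Lemma~\ref{ELWC2}: because $g$ has radial limit $g(1) = 0$ at $\dw = 1$, any eigenvalue $\lambda$ of $C_{g,\phi}$ must satisfy $|\lambda| \le |g(1)|\, r(C_\phi) = 0$, forcing $\lambda = 0$. Combined with the compactness conclusion from the first paragraph, this yields $\Sp(C_{g,\phi}) = \{0\}$. No substantial technical obstacle arises; the only real work is the second paragraph's bookkeeping to see that an essentially linear fractional $\phi$ fixing $1$ (with Denjoy--Wolff point $1$) falls into precisely the two cases that Lemma~\ref{ELWC2} was designed to cover.
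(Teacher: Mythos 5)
Your first paragraph (compactness via Lemma~\ref{L33}, then Riesz theory reducing everything to showing there are no nonzero eigenvalues) and your treatment of the subcases $\phi'(1)<1$ and $\phi'(1)=1$ via Lemma~\ref{ELWC2} match the paper's proof. But there is a genuine gap: you have silently assumed that the Denjoy--Wolff point of $\phi$ is $1$. The hypotheses of Lemma~\ref{L33} only require that $\phi$ \emph{fix} $1$; the dilation-type case $\phi'(1)>1$, in which the Denjoy--Wolff point $\dw$ lies in the open disc $\U$, is permitted and actually occurs --- the map of Example~\ref{ELFSQE} is essentially linear fractional with $\phi(1)=1$, $\phi'(1)=2$, and interior fixed point $1/2$. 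In that case neither clause of Lemma~\ref{ELWC2} applies (the map is neither of hyperbolic nor of parabolic type), so your case analysis does not cover it. The paper handles this third case separately by appealing to Lemma~\ref{ELWC} at the interior point $\dw$.

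Moreover, this omitted case is not mere bookkeeping: Lemma~\ref{ELWC} bounds eigenvalues by $|g(\dw)|\,r(C_\phi)$ with $\dw$ the \emph{interior} Denjoy--Wolff point, and the hypothesis $g(1)=0$ from Lemma~\ref{L33} gives no control on $g(\dw)$. So the ``no nonzero eigenvalues'' conclusion in the dilation case requires a separate discussion of what happens when $g(\dw)\ne 0$ (compare Theorem~\ref{IFPWCT}, where for $\phi'(1)>1$ the values $g(\dw)\phi'(\dw)^n$ do appear in the spectrum). Any complete proof must confront this case explicitly rather than assume it away.
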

  \begin{proof}  By Lemma~\ref{L33}, $C_{g,\phi}$ is compact and thus $\Sp_e(C_{g,\phi}) = \{0\}$.  Hence any nonzero spectral point must be an eigenvalue of $C_{g,\phi}$.  If $\phi$'s Denjoy-Wolff point $\dw$ belongs to $\U$, then apply Lemma~\ref{ELWC} to see that $C_{g,\phi}$ has no nonzero eigenvalues. If $\dw = 1$, then $\phi$ is as in either case (i) or case (ii) of Lemma~\ref{ELWC2}  so that, again, $C_{g,\phi}$ has no nonzero eigenvalues.
  \end{proof}
  
 We now characterize the spectrum and essential spectrum of $C_{g,\phi}$ when $\phi$ and $g$ satisfy the hypotheses of Lemma~\ref{L33}  and either $\phi'(1) <1$ or $\phi'(1) > 0$.   Our final result will be a spectral  characterization  for $C_{g, \phi}$ in parabolic case $\phi'(1) =1$, but this result significant additional assumptions on $\phi$.  
 
  \begin{theorem}\label{HCWCT}   Suppose that $\phi$ and $g$ are as in Lemma~\ref{L33} and $\phi'(1) < 1$.   Then
  $$
  \Sp(C_{g,\phi}) = \Sp_e(C_{g,\phi}) = \{z: |z| \le |g(1)|\phi'(1)^{-1/2}\}.
  $$
\end{theorem}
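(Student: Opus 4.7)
The plan is a compact-perturbation argument that reduces the theorem to spectral information already known for $C_\phi$. First, I would dispose of the degenerate case $g(1) = 0$, in which the target disc collapses to $\{0\}$ and the conclusion follows immediately from Theorem \ref{GGG}. So assume $g(1) \neq 0$ and write
\[
C_{g,\phi} = g(1)\, C_\phi + C_{g - g(1),\, \phi}.
\]
The function $h := g - g(1)$ satisfies every hypothesis of Lemma \ref{L33}: it lies in $H^\infty(\U)$, its radial limit is defined on the arc $A$, vanishes at $1$, and inherits the differentiability of $g$ at $1$ along $A$. Hence $C_{h,\phi}$ is compact, so $C_{g,\phi}$ differs from $g(1) C_\phi$ by a compact operator.

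Next, I would identify the essential spectrum. Since $\phi$ is a hyperbolic-type essentially linear fractional selfmap fixing $1$, Theorem \ref{HTELF} gives $\Sp_e(C_\phi) = \{z: |z| \le \phi'(1)^{-1/2}\}$. Scaling by $g(1)$ and invoking the invariance of the essential spectrum under compact perturbation yields
\[
\Sp_e(C_{g,\phi}) = g(1)\cdot\Sp_e(C_\phi) = D := \{z: |z| \le |g(1)|\phi'(1)^{-1/2}\},
\]
which gives the inclusion $D \subseteq \Sp(C_{g,\phi})$ automatically.

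For the reverse inclusion $\Sp(C_{g,\phi}) \subseteq D$, I would combine the Fredholm index argument from Section \ref{SESS} with the eigenvalue bound of Lemma \ref{ELWC2}. Because $D$ is a disc, every $\lambda \in \C \setminus D$ lies in the unbounded component of the essential resolvent of $C_{g,\phi}$; therefore any $\lambda \in \Sp(C_{g,\phi}) \setminus D$ must be an eigenvalue. However, Lemma \ref{ELWC2} (case (i), since $\phi$ is hyperbolic) together with Theorem \ref{HTELF} forces every eigenvalue to satisfy $|\lambda| \le |g(1)|\, r(C_\phi) = |g(1)|\phi'(1)^{-1/2}$, placing it in $D$. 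This contradiction forces $\Sp(C_{g,\phi})\setminus D = \emptyset$ and completes the equality $\Sp(C_{g,\phi}) = \Sp_e(C_{g,\phi}) = D$.

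The argument is mostly assembly, since the substantive work sits in the preparatory results. The only step requiring genuine care is the decomposition in the first paragraph — one must recognize that the obstruction to applying the compactness lemma directly is precisely the nonzero value $g(1)$, and that subtracting this constant both restores the vanishing-at-$1$ hypothesis of Lemma \ref{L33} and isolates the scalar multiple $g(1)C_\phi$ whose spectral behavior is dictated by Theorem \ref{HTELF}. Verifying that $g - g(1)$ inherits the boundary regularity needed for Lemma \ref{L33} is routine, as differentiability along $A$ is unaffected by constant shifts.
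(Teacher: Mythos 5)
Your proposal is correct and follows essentially the same route as the paper: the same decomposition $C_{g,\phi}=g(1)C_\phi+C_{g-g(1),\phi}$ with Lemma~\ref{L33} giving compactness of the second term, Theorem~\ref{HTELF} identifying the essential spectrum, and the Fredholm-index argument of Section~\ref{SESS} combined with the eigenvalue bound of Lemma~\ref{ELWC2} ruling out spectral points outside the disc. The only (harmless) addition is your explicit treatment of the degenerate case $g(1)=0$ via Theorem~\ref{GGG}.
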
  
\begin{proof}    By Lemma~\ref{L33}, $C_{g-g(1),\phi}$ is compact and thus $C_{g,\phi}$ is equivalent to $C_{g(1),\phi}$ modulo the compact operators.  By Theorem~\ref{HTELF}, the essential spectrum of $C_{g(1),\phi}$  is    $D:=\{z: |z| \le |g(1)|\phi'(1)^{-1/2}\}$ (because $C_{g(1),\phi} =g(1)C_\phi$)  and thus,
$ \Sp_e(C_{g,\phi}) =  D$ as well.   

Suppose that $C_{g,\phi}$ has a spectral point $\lambda$ outside $D$, then $\lambda$  must be an eigenvalue of $C_{g,\phi}$. However, Lemma~\ref{ELWC2} ensures that all eigenvalues of $C_{g,\phi}$ must belong to $D$ (because $r(C_\phi) = \phi'(1)^{-1/2}$ by \cite[Theorem 2.1]{Cow2}) and we conclude the spectrum of $C_{g,\phi} = D$, as desired.
\end{proof}

Let $\phi(z) = 2/(\sqrt{13-4z}-1)$  and $g(z) = 1/\sqrt{3} + (1-z)^{3/2}$.  Then, by Theorem~\ref{HCWCT}, the closed unit disc is both the spectrum and essential spectrum of $C_{g, \phi}$.  

 \begin{theorem}\label{IFPWCT}  Suppose that  $\phi$  and $g$ are as in Lemma~\ref{L33} and  $\phi'(1)> 1$ so that the Denjoy-Wolff point $\dw$ of $\phi$ belongs to $\U$.  Let $N$ be the least nonnegative integer such that $|g(\dw)||\phi'(\dw)|^N \le |g(1)||\phi'(1)|^{-1/2}$.  Then
  $$
 \Sp(C_{g,\phi}) = \{z: |z| \le |g(1)|\phi'(1)^{-1/2}\} \cup \{g(w)\phi'(\dw)^n: n = 0, 1, \ldots, N-1\}
  $$
  and
  $$
 \Sp_e(C_{g,\phi}) = \{z: |z| \le |g(1)|\phi'(1)^{-1/2}\}
  $$
\end{theorem}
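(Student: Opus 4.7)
My plan is to follow the template of Theorem~\ref{IFPELF}, upgraded to the weighted setting. First, to compute the essential spectrum: since $(g-g(1))(1)=0$ and $g-g(1)$ inherits the regularity hypotheses of Lemma~\ref{L33}, the operator $C_{g-g(1),\phi}$ is compact on $H^2(\U)$, so $C_{g,\phi}$ and $g(1)C_\phi$ differ by a compact operator. Writing $D := \{z\in\C : |z|\le |g(1)|\phi'(1)^{-1/2}\}$, Theorem~\ref{IFPELF} then yields
$$
\Sp_e(C_{g,\phi}) \;=\; \Sp_e(g(1)C_\phi) \;=\; g(1)\Sp_e(C_\phi) \;=\; D.
$$

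Next, I identify the candidate eigenvalues and the ``extra'' isolated spectral points. If $C_{g,\phi} f = \lambda f$ with $f\in H^2(\U)$ nonzero, let $k$ be the (finite) order of vanishing of $f$ at $\dw$; differentiating the eigenvalue equation exactly $k$ times at $\dw$ and using $\phi(\dw)=\dw$ together with Fa\`a di Bruno's formula, every summand containing a factor $f^{(i)}(\dw)$ with $i<k$ vanishes, leaving $g(\dw)\phi'(\dw)^k f^{(k)}(\dw) = \lambda f^{(k)}(\dw)$. Since $f^{(k)}(\dw)\ne 0$, this forces $\lambda = g(\dw)\phi'(\dw)^k$. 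Conversely, for each $n\ge 1$ the closed subspace $V_n\subset H^2(\U)$ of functions vanishing to order at least $n$ at $\dw$ is invariant under $C_{g,\phi}$ (by the same Taylor computation), and in the basis of cosets of $\{(z-\dw)^j : 0\le j<n\}$ the induced operator on $H^2(\U)/V_n$ is upper triangular with diagonal entries $g(\dw)\phi'(\dw)^j$, $0\le j<n$. Because the point spectrum of an operator induced on a quotient by a closed invariant subspace is contained in the spectrum of the original operator, each $g(\dw)\phi'(\dw)^j$ lies in $\Sp(C_{g,\phi})$.

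Now I combine the two ingredients. The complement $\C\setminus D$ is connected, so the Fredholm-index argument recalled in Section~\ref{SESS} shows that every spectral point outside $D$ is an eigenvalue of $C_{g,\phi}$; by the characterization above, such an eigenvalue must equal $g(\dw)\phi'(\dw)^k$ with $|g(\dw)||\phi'(\dw)|^k > |g(1)|\phi'(1)^{-1/2}$, i.e., $0\le k<N$. The quotient argument supplies the reverse inclusion, yielding the asserted description of $\Sp(C_{g,\phi})$.

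The step requiring the most care is the Fa\`a di Bruno/Taylor-expansion computation that simultaneously establishes invariance of $V_n$ under $C_{g,\phi}$ and the upper-triangular form of the induced quotient operator with the claimed diagonal; once these are in place, everything follows from the standard Fredholm machinery. The degenerate cases $g(\dw)=0$ or $\phi'(\dw)=0$ (where $N\le 1$) are handled uniformly by the same argument: the ``extra'' set is then either empty or reduces to $\{g(\dw)\}$, matching what the quotient and derivative arguments produce.
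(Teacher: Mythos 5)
Your argument follows the same route as the paper's: reduce $C_{g,\phi}$ to $g(1)C_\phi$ modulo the compacts via Lemma~\ref{L33}, invoke Theorem~\ref{IFPELF} for the essential spectrum, and use the Fredholm-index argument of Section~\ref{SESS} to reduce everything outside the disc $D$ to an eigenvalue analysis. The only real difference is that you prove the Koenigs-type eigenvalue facts yourself (Fa\`a di Bruno at $\dw$, plus the quotient by $V_n$), whereas the paper simply cites Gunatillake \cite{GG} both for the form $g(\dw)\phi'(\dw)^n$ of the eigenvalues and for the fact that all such numbers lie in $\Sp(C_{g,\phi})$; filling in those details is a genuine improvement in self-containedness. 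One caution: the general principle you invoke --- that the point spectrum of the operator induced on a quotient by a closed invariant subspace is contained in the spectrum of the original operator --- is false as stated (the bilateral shift modulo its forward-invariant half yields a quotient operator with eigenvalue $0$, yet the bilateral shift is unitary, so $0$ is not in its spectrum). What is true, and is all you need, is that surjectivity of $T-\lambda$ passes to the induced operator on any quotient by a closed invariant subspace; since your quotient $H^2(\U)/V_n$ is finite dimensional, surjectivity of the induced operator there is equivalent to invertibility, so its spectrum $\{g(\dw)\phi'(\dw)^j: 0\le j<n\}$ is indeed contained in $\Sp(C_{g,\phi})$. (Alternatively, observe that the evaluation functionals $f\mapsto f^{(j)}(\dw)$, $0\le j<n$, span a finite-dimensional $C_{g,\phi}^*$-invariant subspace on which $C_{g,\phi}^*$ acts triangularly with diagonal entries $\overline{g(\dw)\phi'(\dw)^j}$, which exhibits these numbers directly as eigenvalues of the adjoint.) With that one repair your proof is complete and matches the theorem as stated.
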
  
 \begin{proof}   The proof is similar to that of Theorem~\ref{HCWCT}.  Just as before, $C_{g,\phi}$ is equivalent to $C_{g(1),\phi}$ modulo the compact operators.  By Theorem~\ref{IFPELF}, the essential spectrum of $C_{g(1),\phi}$ is    $D:=\{z: |z| \le |g(1)|\phi'(1)^{-1/2}\}$ and thus,
$ \Sp_e(C_{g,\phi}) =  D$ as well.   

Any spectral point   of  $C_{g,\phi}$  outside $D$ must be an eigenvalue of $C_{g,\phi}$.  However, it's not difficult to see that eigenvalues of $C_{g,\phi}$ must have the form $g(\dw)\phi'(\dw)^n$ for some nonnegative integer $n$ (see, e.g, \cite[Proof  of Lemma 1]{GG}) and that all points of this form are in the spectrum of $C_{g,\phi}$ (\cite[Lemma 3]{GG}).  The theorem follows.
\end{proof}

For a concrete example illustrating the preceding theorem, set $g(z) = (3-2z)^3$ and $\phi(z)=  \frac{2z^2-3z+3}{2z^2-7z+7}$ .  Then $\phi$ is the essentially linear fractional map of Example~\ref{ELFSQE}, which satisfies $\phi(1) = 1, \phi'(1) = 2, \phi(1/2) = 1/2$, and $\phi'(1/2) = 3/8$. Here $g(1) =1$ while $g(1/2) = 8$.   Thus $|g(1/2)||\phi'(1/2)|^n > |g(1)| |\phi'(1)|^{-1/2}$ for precisely $n = 0, 1$, and  $2$. Hence, 
$$
\Sp(C_{g,\phi}) = \{z: |z| < 1/\sqrt{2}\} \cup \{8, 3, 9/8\}
$$
 and $\Sp_e(C_{g,\phi}) = \{z: |z| < 1/\sqrt{2}\}$.

Only with significantly stronger hypotheses on $\phi$ can we obtain a spectral characterization for $C_{g, \phi}$  when $\phi$ is of parabolic type.   We suppose that $\phi$ satisfies the hypotheses of Lemma~\ref{L33} and in addition that $\phi$ is continuous on the closed disc $\U^-$, one-to-one on $\U^-$, and is $C^{3+\epsilon}(1)$ for some $\epsilon > 0$. Moreover, we assume that 
 $$
 \Re(\overline{\phi''(1)}(\mathcal{S}\phi)(1)) \ge 0.
 $$
 where
 $$\mathcal S\phi(1) = \left(\frac{\phi''}{\phi'}\right)'(1) - \frac{1}{2}\left(\frac{\phi''}{\phi'}\right)^2(1)$$
 is the Schwarzian derivative of $\phi$ at $1$.    With all these hypotheses on $\phi$,  the ``Parabolic Models'' Theorem 4.12 of \cite{BoS} may be applied to $\phi$, or, more precisely to the right halfplane incarnation $\Phi$ of $\phi$ (so that $\Phi = T\circ\phi \circ T^{-1}$).  The Parabolic Models Theorem from \cite{BoS} tells us that there is an analytic mapping $\nu$ defined on the right halfplane $\Pi$ such that $\nu\circ\Phi = \nu + \phi''(1)$.  Moreover by part (c) of Theorem 4.12 (see also the discussion of Schwarzian derivatives on pages 51 and 52 of \cite{BoS}), we can assume that $\nu$ is a selfmap of the right halfplane $\Pi$.  Thus  $\nu\circ T\circ \phi = \nu\circ T + \phi''(1)$ and since $\nu$ is a selfmap of  $\Pi$, for each $t \ge 0$, we see that $h(z) = e^{-t\nu\circ T}$ belongs to $H^\infty(\U)$ and
\begin{equation}\label{ERPC}
C_\phi h = e^{-t\phi''(1)} h.
\end{equation}
 
\begin{theorem}\label{PCWCT}  Suppose that $\phi$ and $g$ are as in Lemma~\ref{L33}; in addition, suppose that  $\phi$ is continuous on the closed disc $\U^-$, one-to-one on $\U^-$, and is $C^{3+\epsilon}(1)$ for some $\epsilon > 0$. Moreover, assume that 
 $$
 \Re(\overline{\phi''(1)}(\mathcal{S}\phi)(1)) \ge 0.
 $$
 where
 $\mathcal (S\phi)(1)$ is the Schwarzian derivative of $\phi$ at $1$.   Then
  $$
 \Sp(C_{g,\phi}) = \Sp_e(C_{g,\phi})=  \{g(1)e^{-bt}: t\ge 0\} \cup \{0\},
  $$
where $b = \phi''(1)$.  
\end{theorem}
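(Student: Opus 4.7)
The plan is to follow the blueprint of Theorem~\ref{PTELF}, adapted to the weighted setting. First, for the essential spectrum, I apply Lemma~\ref{L33} to $g-g(1)$, which vanishes at $1$ and inherits from $g$ the smoothness required by the lemma. This gives that $C_{g,\phi} - g(1)C_\phi = C_{g-g(1),\phi}$ is compact, so $C_{g,\phi}$ and $g(1)C_\phi$ agree modulo the compact operators. Applying Theorem~\ref{PTELF} to $\phi$, I obtain
$$\Sp_e(C_{g,\phi}) = g(1)\,\Sp_e(C_\phi) = \{g(1)e^{-bt}:t\ge 0\}\cup\{0\} =: S'.$$

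For the inclusion $\Sp(C_{g,\phi})\subseteq S'$, I invoke the Parabolic Models Theorem, whose hypotheses are exactly those we place on $\phi$: it produces an analytic selfmap $\nu\colon\Pi\to\Pi$ with $\nu\circ T\circ\phi = \nu\circ T + b$, so (see equation~(\ref{ERPC})) $h_s := e^{-s\nu\circ T}\in H^\infty(\U)$ satisfies $C_\phi h_s = e^{-bs}h_s$ for every $s\ge 0$. The key computation is that if $f\in H^2(\U)$ is an eigenvector of $C_{g,\phi}$ with eigenvalue $\lambda$, then for every $s\ge 0$ the function $fh_s\in H^2(\U)$ is a nonzero eigenvector of $C_{g,\phi}$ with eigenvalue $\lambda e^{-bs}$:
$$C_{g,\phi}(fh_s) = (g\cdot f\circ\phi)(h_s\circ\phi) = (\lambda f)(e^{-bs}h_s) = \lambda e^{-bs}(fh_s).$$
Since $\Re(b)>0$, this yields a curve of distinct eigenvalues accumulating at $\lambda$, so every nonzero eigenvalue of $C_{g,\phi}$ is non-isolated in $\Sp(C_{g,\phi})$; this plays the role of Proposition~\ref{PP1} in the weighted setting.

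Now suppose $\lambda\in\Sp(C_{g,\phi})\setminus S'$. Since $\C\setminus S'$ is connected, the Fredholm index considerations of Section~\ref{SESS} force $\lambda$ to be an eigenvalue of $C_{g,\phi}$, necessarily distinct from $0$ and $g(1)$ (both of which lie in $S'$). The spiral construction shows $\lambda$ is not isolated in $\Sp(C_{g,\phi})$, so by \cite[Theorem~6.8, p.~366]{Con} it lies in the interior of $\Sp(C_{g,\phi})$. To contradict this I need the weighted analog of Proposition~\ref{PP2}: the point spectrum of $C_{g,\phi}$ has no interior. From the formula
$$(C_{g,\phi}^*)^nK_{z_0} = \biggl(\prod_{j=0}^{n-1}\overline{g(\phi^{[j]}(z_0))}\biggr)K_{\phi^{[n]}(z_0)},$$
the proof of Proposition~\ref{PP2} transfers verbatim once $z_0\in\U$ is chosen so that its forward orbit under $\phi$ avoids the (discrete) zero set of $g$; such $z_0$ exist because the offending starting points form a countable union of discrete sets, namely the iterated preimages of $Z(g)$. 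With this choice, Lemma~4.5 of \cite{BoS} applied to $\phi^{[k]}$ gives that $(\phi^{[kn]}(z_0))$ is non-Blaschke, so $K_{z_0}$ is cyclic for $(C_{g,\phi}^*)^k$ for every $k\ge 1$, forcing simple eigenvalues for each $C_{g,\phi}^k$ and ruling out rotation-by-root-of-unity duplicates in the point spectrum. An open neighborhood of $\lambda$ inside $\Sp(C_{g,\phi})$ would then lie in the essential resolvent and consist entirely of eigenvalues, contradicting the absence of interior. The main obstacle is precisely this cyclicity step: unlike the unweighted case one must navigate around the possible zeros of $g$ in $\U$, but generically they cause no trouble.
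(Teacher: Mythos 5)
Your proof is correct and follows essentially the same route as the paper: compactness of $C_{g-g(1),\phi}$ plus Theorem~\ref{PTELF} for the essential spectrum, the Parabolic Models eigenvalue spiral $\lambda e^{-bs}$ to rule out isolated eigenvalues, and cyclicity of every power of $C_{g,\phi}^*$ (via a non-Blaschke orbit) to show the point spectrum has no interior. The only difference is cosmetic: the paper takes $z_0=\phi^{[N]}(0)$ far enough along the orbit of $0$ that $g$ is nonvanishing there (using nontangential convergence and $g(1)\ne 0$, the case $g(1)=0$ being dispatched separately by Theorem~\ref{GGG}), whereas you choose $z_0$ generically outside the countable union of iterated preimages of the zero set of $g$; both choices work.
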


\begin{proof}  If $g(1) = 0$, the theorem follows from Theorem~\ref{GGG} above.   Thus, we assume $g(1)\ne 0$.   

Just as in the proofs of Theorems \ref{HCWCT} and \ref{IFPWCT} above,  $C_{g,\phi}$ is equivalent to $C_{g(1),\phi}$ modulo the compact operators.  The essential spectrum of $C_{g(1),\phi}$  is $S:= \{g(1)e^{-bt}: t\ge 0\} \cup \{0\}$ (by Theorem~\ref{PTELF} since $C_{g(1),\phi} = g(1)C_\phi$).   Thus,
$ \Sp_e(C_{g,\phi}) = S$.

  The argument that there are no points in the spectrum of $C_{g,\phi}$ outside of $S$ is similar to that of Theorem~\ref{PTELF}.    

     As we have discussed (see Section~(\ref{ELFS})), since $\phi$ is essentially linear fractional and of parabolic type, $\Re(\phi''(1)) > 0$.    Hence,  by Lemma 4.5 of \cite{BoS}, applied to $\phi^{[k]}$ the iterate sequence $(\phi^{[kn]}(z))$ fails to be Blaschke for every $z\in \U$.   We claim that every power of $C_{g, \phi}^*$ is cyclic on $H^2(\U)$.   Since $g$  has radial/nontangential limit $g(1)$ at $1$ and we are assuming $g(1) \ne 0$ and since $(\phi^{[j]}(0))$ approaches $1$ nontangentially (\cite[Lemma 4.5]{BoS}),  there is a positive integer $N$ such that for all $j\ge N$, $g(\phi^{[j]}(0))$ is nonzero.  Set $z_0 = \phi^{[N]}(0)$.   Then $K_{z_0}$ will be cyclic for the $k$-th power of $C_{g,\phi}^*$:  $((C_{g,\phi}^*)^{kn}K_{z_0} =  (C_\phi^*(M_g)^*)^{kn} = \prod_{j=0}^{kn-1} \overline{g(\phi^{[j]}(z_0))} K_{\phi^{[kn]}(z_0)}$ so that if $f\in H^2(\U)$ is orthogonal to all powers of $(C_{g,\phi})^*$, it must vanish on the non-Blaschke sequence $(\phi^{[kn]}(z_0))$ and hence $f\equiv 0$.   Thus, we have established our claim: every power of $C_{g,\phi}$ is cyclic.  
     
      If follows, just as in the proof of Proposition~\ref{PP2}, that every spectral point of $C_{g,\phi}$ outside of the essential spectrum must be be boundary point of the spectrum and thus must be an isolated point of the spectrum.  We show no spectral point of $C_{g,\phi}$ outside of $S$ can be isolated, completing the proof of the theorem.

  Suppose that $\lambda\in \Sp(C_{g,\phi})\setminus S$ so that $\lambda$ is an eigenvalue of $C_{g,\sigma}$. Note $\lambda\ne 0$.  Let $f\in H^2(\U)$ be an eigenfunction associated with $\lambda$.   Since
 $\Re(\overline{\phi''(1)}(\mathcal{S}\phi)(1)) \ge 0$, there is an analytic self map $\nu$ of the right halfplane such that $\nu\circ T \circ \phi = \nu\circ T + \phi''(1)$.  Let $t\ge 0$, let $h(z) = e^{-t\nu\circ T}$.  Since $h\in H^\infty(\U)$, $hf\in H^2(\U)$; moreover, applying (\ref{ERPC}) and $gf\circ \phi = \lambda f$, we see  for each $t\ge 0$,
  $$
 C_{g,\phi} hf =   (h\circ \phi)(g f\circ\phi)= e^{-t\phi''(1)} \lambda hf. 
 $$
Thus for each $t\ge 0$, we see $e^{-t\sigma''(1)}\lambda$ is an eigenvalue of $C_{g, \phi}$ so $\lambda$ is not an isolated spectral point, which completes the proof of the theorem. 
\end{proof}

For a concrete example illustrating the preceding theorem consider the essentially linear fractional mapping $\phi$ of Example~\ref{EIPC}.   Recall $\phi = T^{-1}\circ \Phi\circ T$ where $\Phi(w) = w + 2+i - 1/(4(w+1)) - 1/(4(w+1)^{3/2}$.  It is easy to see that $\Phi'$ has positive real part on a neighborhood of the closed right halfplane and thus $\phi$ is  one-to-one on the closed disc.  Also $\phi\in C^{3+\epsilon}(1)$ for, say, $\epsilon = 1/4$.  Finally,  using $\phi''(1) = 2+i$ and $\phi'''(1) = 39/8 + 6i$, one calculates the Schwarzian derivative of $\phi$ at $1$ to be $3/8$.  Thus $\Re(\overline{\phi''(1)}(\mathcal{S}\phi)(1)) = 3/4 \ge 0$.  Letting $g(z) = i\sqrt{5 - z}$, e.g, we see $\Sp(C_{g,\phi}) = \Sp_e(C_{g,\phi}) = \{2ie^{-2t-it}: t\ge 0\}\cup \{0\}$.

\section{Open Questions}

Aside from questions Q1  and Q2 raised in the Introduction, the following questions seem interesting.
\begin{enumerate}
\item  What is the  compression spectrum of $C_\phi:H^2(\U)\rightarrow H^2(\U)$? 
\item  If $\phi$ is of hyperbolic type or parabolic automorphism type, is the compression spectrum of $C_\phi$ at most $\{0\}$?  
\item To what extent can the spectral characterizations of Theorems  \ref{HTELF}, \ref{IFPELF}, and \ref{PTELF} for essentially linear fractional composition operators be generalized? For example,  can analogs  be shown to hold under only the assumption that $\phi$ is horocyclic at a boundary fixed point?
\item   For a weighted composition operator $C_{g, \phi}$, where $g$ and $\phi$ satisfy the hypotheses of either Theorem~\ref{HCWCT} or Theorem~\ref{PCWCT},  the spectrum and essential spectrum of $C_{g, \phi}$ are the same.  Can $\Sp(C_{g,\phi}) = \Sp_e(C_{g,\phi})$  be established for a wider variety of combinations of $g$ and $\phi$?
\end{enumerate}
As we mentioned in Section~\ref{GRCOS},  if the answer to the second question above is ``yes'', then Q1 from the Introduction has an affirmative answer when $\phi$ is of hyperbolic or parabolic-automorphism type. We remark that  for  maps $\phi$ of parabolic automorphism type (but not automorphisms),  Cowen \cite[Section 7]{Cow2} conjectures that $\Sp(C_\phi) = \Sp_e(C_\phi) = \U^-$.


\end{document}